\newtheorem{thrm}{Theorem}[section]
\newtheorem{lem}[thrm]{Lemma}
\newtheorem{prop}[thrm]{Proposition}
\newtheorem{cor}[thrm]{Corollary}
\theoremstyle{definition}
\newtheorem{remark}[thrm]{Remark}
\numberwithin{equation}{section}
\newcommand{\mc}[1]{\mathcal{#1}}
\newcommand{\e}[1]{\emph{#1}}
\newcommand{\la}{\langle}
\newcommand{\ra}{\rangle}
\newcommand{\tr}{\mathrm{tr}}
\newcommand{\rmv}[1]{}
\newcommand{\hs}{\hskip10pt}
\newcommand{\LO}{L^1(G)}
\newcommand{\LOQ}{L^1(\mathbb{G})}
\newcommand{\LOQs}{L^1_*(\mathbb{G})}
\newcommand{\LTQ}{L^2(\mathbb{G})}
\newcommand{\LI}{L^{\infty}(G)}
\newcommand{\LIQ}{L^{\infty}(\mathbb{G})}
\newcommand{\LIQH}{L^{\infty}(\widehat{\mathbb{G}})}
\newcommand{\BH}{\mc{B}(H)}
\newcommand{\BLTQ}{\mc{B}(L^2(\mathbb{G}))}
\newcommand{\TCQ}{\mc{T}(L^2(\mathbb{G}))}
\newcommand{\vphi}{\varphi}
\newcommand{\Lphi}{\Lambda_\varphi}
\newcommand{\Lphis}{\Lambda_{\varphi\otimes\varphi}}
\newcommand{\al}{\alpha}
\newcommand{\be}{\beta}
\newcommand{\lm}{\lambda}
\newcommand{\Lm}{\Lambda}
\newcommand{\Gam}{\Gamma}
\newcommand{\om}{\omega}
\newcommand{\ten}{\otimes}
\newcommand{\oten}{\overline{\otimes}}
\newcommand{\hten}{\widehat{\otimes}}
\newcommand{\id}{\textnormal{id}}
\newcommand{\h}[1]{\widehat{#1}}
\newcommand{\Irr}{\mathrm{Irr}(\mathbb{G})}
\providecommand{\norm}[1]{\lVert#1\rVert}
\newcommand{\G}{\mathbb{G}}
\newcommand{\D}{\mathcal{D}}
\newcommand{\C}{\mathbb{C}}
\newcommand{\R}{\mathbb{R}}
\author{Mahmood Alaghmandan}
\address{Department of Mathematical Sciences, Chalmers University of Technology and University of Gothenburg, Gothenburg SE-412 96, Sweden}
\email{mahala@chalmers.se}
\author{Jason Crann}
\address{School of Mathematics and Statistics, Carleton University, Ottawa, ON, Canada K1S 5B6}
\email{jason.crann@carleton.ca}
\keywords{Compact quantum groups; Irreducible characters.}
\subjclass[2010]{Primary 43A20; Secondary 43A40, 46J40.}
\begin{document}

\title[Character density in compact quantum groups]{Character density in central subalgebras of compact quantum groups}

\begin{abstract} We investigate quantum group generalizations of various density results from Fourier analysis on compact groups. In particular, we establish the density of characters in the space of fixed points of the conjugation action on $\LTQ$, and use this result to show the weak* density and norm density of characters in $Z\LIQ$ and $ZC(\G)$, respectively. As a corollary, we partially answer an open question of Woronowicz \cite{Wo}. At the level of $\LOQ$, we show that the center $\mc{Z}(\LOQ)$ is precisely the closed linear span of the quantum characters for a large class of compact quantum groups, including arbitrary compact Kac algebras. In the latter setting, we show, in addition, that $\mc{Z}(\LOQ)$ is a completely complemented $\mc{Z}(\LOQ)$-submodule of $\LOQ$.
\end{abstract}
\maketitle

\section{Introduction}

As in the group setting, irreducible characters play a significant role in harmonic analysis on compact quantum groups \cite{Teo,B,CFY,Wo}. In this note, we investigate the relationship between the irreducible characters of compact quantum groups $\G$ and the central subalgebras of the Banach algebras $\LTQ$ and $\LOQ$, and the operator algebras $C(\G)$ and $\LIQ$. We characterize the fixed points of the two canonical conjugation actions on $\LTQ$ as the closed linear span of the characters and quantum characters, respectively, the latter being equal to the center $\mc{Z}(\LTQ)$. We then use these characterizations to establish the weak* density of characters in $Z\LIQ:=\{x\in\LIQ\mid\Gam(x)=\Sigma\Gamma(x)\}$ and norm density in $ZC(\G):=\{x\in C(\G)\mid\Gam(x)=\Sigma\Gam(x)\}$, thereby partially answering an open question of Woronowicz (see \cite[Proposition 5.11]{Wo}), and generalizing the partial solution of Lemeux in the Kac setting \cite[Theorem 1.4]{Lemeux}. For any compact quantum group whose dual has the central almost completely positive approximation property in the sense of \cite[Definiton 3]{CFY}, we show that $\mc{Z}(\LOQ)$ is the closed linear span of the characters. We establish the same result for arbitrary compact Kac algebras by showing that $\mc{Z}(\LOQ)$ is a completely complemented $\mc{Z}(\LOQ)$-submodule of $\LOQ$.

\section{Compact quantum groups}

A \textit{locally compact quantum group} is a quadruple $\G=(\LIQ,\Gam,\vphi,\psi)$, where $\LIQ$ is a Hopf-von Neumann algebra with a co-associative co-multiplication $\Gam:\LIQ\rightarrow\LIQ\oten\LIQ$, and $\vphi$ and $\psi$ are fixed (normal faithful semifinite) left and right Haar weights on $\LIQ$, respectively \cite{KV1,KV2}. For every locally compact quantum group $\G$ there exists a \e{left fundamental unitary operator} $W$ on $L^2(\G,\vphi)\ten_2 L^2(\G,\vphi)$ and a \e{right fundamental unitary operator} $V$ on $L^2(\G,\psi)\ten_2 L^2(\G,\psi)$ implementing the co-multiplication $\Gam$ via
\begin{equation*}\Gam(x)=W^*(1\ten x)W=V(x\ten 1)V^*, \ \ \ x\in\LIQ.\end{equation*}
Both unitaries satisfy the \e{pentagonal relation}; that is,
\begin{equation}\label{penta}W_{12}W_{13}W_{23}=W_{23}W_{12}\hs\hs\mathrm{and}\hs\hs V_{12}V_{13}V_{23}=V_{23}V_{12}.\end{equation}
At the level of the Hilbert spaces,
$$W^*\Lphis(x\ten y)=\Lphis(\Gam(y)(x\ten 1))\hs\mathrm{and}\hs V\Lambda_{\psi\ten\psi}(a\ten b)=\Lambda_{\psi\ten\psi}(\Gam(a)(1\ten b))$$
for $x,y\in\mc{N}_\vphi$ and $a,b\in\mc{N}_\psi$. By \cite[Proposition 2.11]{KV2}, we may identify $L^2(\G,\vphi)$ and $L^2(\G,\psi)$, so we will simply use $\LTQ$ for this Hilbert space throughout the paper. The \textit{reduced quantum group $C^*$-algebra} of $\LIQ$ is defined as
$$C_0(\G):=\overline{\{(\id\ten\om)(W)\mid \om\in\TCQ\}}^{\norm{\cdot}}.$$
We say that $\G$ is \textit{compact} if $C_0(\G)$ is a unital $C^*$-algebra, in which case we denote $C_0(\G)$ by $C(\G)$. For compact quantum groups it follows that $\vphi$ is finite and right invariant. In particular, $\vphi=\psi$.

We let $R$ and $(\tau_t)_{t\in\R}$ denote the \textit{unitary antipode} and \textit{scaling group} of $\G$, respectively. The unitary antipode satisfies
\begin{equation}\label{e:antipode} (R\ten R)\circ\Gam = \Sigma\circ\Gamma\circ R,\end{equation}
where $\Sigma:\LIQ\oten\LIQ\rightarrow\LIQ\oten\LIQ$ denotes the flip map. The \textit{antipode} of $\G$ is $S=R\tau_{-\frac{i}{2}}$, and is a closed densely defined operator on $\LIQ$, whose domain we denote by $\mc{D}(S)$.

Let $\LOQ$ denote the predual of $\LIQ$. Then the pre-adjoint of $\Gam$ induces an associative completely contractive multiplication on $\LOQ$, defined by
\begin{equation*}\star:\LOQ\hten\LOQ\ni f\ten g\mapsto f\star g=\Gam_*(f\ten g)\in\LOQ.\end{equation*}
There is a canonical $\LOQ$-bimodule structure on $\LIQ$, given by
\begin{equation*}\la f\star x,g\ra=\la x,g\star f\ra\hs\mathrm{and}\hs\la x\star f,g\ra=\la x,f\star g\ra, \ \ \ x\in\LIQ, \ f,g\in\LOQ.\end{equation*}
We say that $\G$ is \textit{co-amenable} if $\LOQ$ has a bounded left (equivalently, right or two-sided) approximate identity (cf. \cite[Theorem 3.1]{BT}).

Let $\LOQs$ be the subspace of $\LOQ$ defined by
\begin{equation*}\LOQs=\{f\in\LOQ : \exists \ g\in\LOQ\hs\text{s.t.}\hs g(x)=f^*\circ S(x)\hs\forall x\in\mc{D}(S)\},\end{equation*}
where $f^*(x)=\overline{f(x^*)}$, $x\in\LIQ$. It is known from \cite[\S 1.13]{V} that $\LOQs$ is a dense subalgebra of $\LOQ$. There is an involution on $\LOQs$ given by $f^o=f^*\circ S$, such that $\LOQs$ becomes a Banach *-algebra under the norm $\norm{f}_*=\text{max}\{\norm{f},\norm{f^o}\}$.

A \textit{unitary co-representation} of $\G$ is a
unitary $U\in\LIQ\oten\BH$ satisfying $(\Gam\ten\id)(U)=U_{13}U_{23}$. Every unitary co-representation gives rise to a representation of $\LOQ$ via
$$\LOQ\ni f \mapsto (f\ten\id)(U)\in\BH.$$
In particular, the left fundamental unitary $W$ gives rise to the \textit{left regular representation} $\lm:\LOQ\rightarrow\BLTQ$ defined by $\lm(f)=(f\ten\id)(W)$, $f\in\LOQ$,
which is an injective, completely contractive homomorphism from $\LOQ$ into $\BLTQ$. Then $L^{\infty}(\h{\G}):=\{\lm(f) : f\in\LOQ\}''$ is the von Neumann algebra associated with the dual quantum group $\h{\G}$ of $\G$. When $\G$ is compact with normalized Haar state $\vphi$, the following holds \cite{Wo}: every irreducible co-representation $u^{\alpha}$ is finite-dimensional and is unitarily equivalent to a sub-representation
of $W$, and every unitary co-representation of $\G$ can be decomposed into a direct
sum of irreducible co-representations. We let $\Irr:=\{u^{\alpha}\}$ denote a complete set of representatives
of irreducible co-representations of $\G$ which are pairwise inequivalent. Slicing by vector functionals $\om_{ij}=\om_{e_j,e_i}$ relative to an orthonormal basis
of $H_\alpha$, we obtain elements $u^{\alpha}_{ij}=(\id\ten\om_{ij})(u^{\alpha})\in\LIQ$ satisfying
\begin{equation*}\Gam(u^{\alpha}_{ij})=\sum_{k=1}^{n_\alpha}u^{\alpha}_{ik}\ten u^{\alpha}_{kj}, \ \ \ 1\leq i,j\leq n_{\alpha}\end{equation*}
The linear space $\mc{A}:=\textnormal{span}\{u^{\alpha}_{ij}\mid\al\in\Irr \ 1\leq i,j\leq n_\al\}$ forms unital Hopf *-algebra which is dense in $C(\G)$.

For every $\alpha\in\Irr$ there exists a positive invertible matrix
$F^\alpha\in M_{n_\alpha}(\C)$ such that the corresponding ``$F$--matrices'' implement the left Haar weight of the dual $\h{\G}$. Without loss of generality, we may assume that $F^{\alpha}=\text{diag}(\lm_1^{\alpha},\cdots,\lm_{n_\alpha}^{\alpha})$ \cite[Proposition 2.1]{D1}. Since $\tr(F^{\alpha})=\tr(F^{\alpha})^{-1}$, it follows that
$$\sum_{i=1}^{n_\al}\lm^\al_i=\sum_{i=1}^{n_\al}\frac{1}{\lm^\al_i}=\tr(F^\al)=:d_\al,$$
where $d_\al$ is the \emph{quantum dimension} of $u^\al$. If $\G$ is a \textit{compact Kac algebra}, meaning $\vphi$ is tracial, then $d_\al=n_\al$ and $F^\al=1_{n_\al}$ for all $\al\in\Irr$. For every $\al$ there exists a conjugate representation $\overline{\al}$ on $\overline{H}_\al$, such that $\lm^{\overline{\al}}_i=(\lm^\al_i)^{-1}$ and
$u_{ij}^{\overline{\al}}=\sqrt{\frac{\lm_i^\al}{\lm_j^\al}}u_{ij}^{\al^*}$ (see \cite[Proposition 1.4.6]{NT2}).



In the general setting the Peter--Weyl orthogonality relations are as follows:
$$\vphi((u^\be_{kl})^*u_{ij}^\al)=\delta_{\al\be}\delta_{ik}\delta_{jl}\frac{1}{\lm^\al_id_\alpha},\ \ \vphi(u^\be_{kl}(u_{ij}^\al)^*)=\delta_{\al\be}\delta_{ik}\delta_{jl}\frac{\lm^\al_j}{d_\alpha}.$$
From this it follows that $\{\sqrt{d_\al \lm_i^{\al}}\Lphi(u_{ij}^\al)\mid\al\in\Irr, 1\leq i,j\leq n_\al\}$ is an orthonormal basis for $\LTQ$.

For an element $x\in\LIQ$, we let $x\cdot\vphi$ and $\vphi\cdot x$ denote the elements in $\LOQ$ given by $\la x\cdot\vphi,y\ra=\vphi(yx)$, and
$\la\vphi\cdot x,y\ra=\vphi(xy)$, $y\in\LIQ$. If $x=u_{ij}^\al$, we denote $u_{ij}^\al\cdot\vphi$ by $\vphi_{ij}^\al$. By the density of $\mc{A}$ in $C(\G)$, it follows that $\LIQ\cdot\vphi$ is dense in $\LOQ$. Let
$$\mathcal{I}_\vphi:=\{f\in\LOQ\mid \ \exists  \ M>0 \ : \ |\la f,x^*\ra|\leq M\norm{\Lm_\vphi(x)} \ \forall \ x\in\LIQ\}.$$
Then $\mathcal{I}_\vphi$ is a dense left ideal in $\LOQ$ containing $\LIQ\cdot\vphi$ such that for every $f\in\mathcal{I}_\vphi$, there exists a unique $a(f)\in\LTQ$ satisfying $\la a(f),\Lm_\vphi(x)\ra=\la f,x^*\ra$ for all $x\in\LIQ$.
If $x\in\LIQ$, then $a(x\cdot\vphi)=\Lm_\vphi(x)$. 

By left invariance of the Haar state, the map $I:\LTQ\ni\Lphi(x)\mapsto \Lphis(\Gam(x))\in\LTQ\ten_2\LTQ$ is an isometry. Composing its adjoint $I^*:\LTQ\ten_2\LTQ\rightarrow\LTQ$ with the canonical contraction $\LTQ\ten^{\gamma}\LTQ\rightarrow \LTQ\ten_2\LTQ$, where $\ten^\gamma$ denotes the projective tensor product, we obtain a Banach algebra structure on $\LTQ$. On elementary tensors, the multiplication is given by
$$\Lm_\vphi(x)\ten\Lphi(y)\mapsto a((x\cdot\vphi)\star(y\cdot\vphi)), \ \ \ x,y\in\LIQ.$$
Moreover, there exists a contractive homomorphic injection $b:\LTQ\rightarrow\LOQ$ satisfying $b(a(f))=f$ for all $f\in\mathcal{I}_\vphi$. We refer the reader to \cite[\S6.2]{ES} for details in the Kac case; the proofs carrying over verbatim to general compact $\G$. 

As in the case of compact groups, the irreducible characters of $\G$ play an important role in the harmonic analysis. For $\al\in\Irr$, we let
$$\chi^\al:=(\id\ten\tr)(u^{\al})=\sum_{i=1}^{n_\al}u^\al_{ii}\in\LIQ$$
be the \emph{character} of $\al$, and we let
$$\chi^\al_q:=(\id\ten F^\al)(u^{\al})=\sum_{i=1}^{n_\al}\lm^\al_i u^\al_{ii}\in\LIQ$$
be the \emph{quantum character} of $\al$. The characters (as well as the quantum characters) satisfy the decomposition relations:
\begin{equation}\label{e:N}\chi^\al\chi^\be=\sum_{\gamma\in\Irr}N^{\gamma}_{\al\be}\chi^\gamma,\end{equation}
where $N^{\gamma}_{\al\be}$ is the multiplicity of $\gamma$ in the tensor product representation $\al\ten\be$ (see \cite[Proposition 1.4.3]{NT2}). It follows that $\chi^{\overline{\al}}=\chi^{\al^*}$, $\al\in\Irr$, so that $\overline{\mathrm{span}}\{\chi^\al\mid\al\in\Irr\}$ is a $C^*$-subalgebra of $C(\G)$.
Letting $\vphi^\al_q:=\chi^\al_q\cdot\vphi$ be the $\LOQ$ elements corresponding to the quantum characters of $\G$, it follows from the orthogonality relations that
$$\la\vphi_q^\al\star f,u^{\be*}_{kl}\ra=\la f\star\vphi_q^\al,u^{\be*}_{kl}\ra=\la f,u^{\be*}_{kl}\ra\frac{\delta_{\al\be}}{d_\al}$$
for all $f\in\LOQ$ and $\be\in\Irr$. In particular,
\begin{equation}\label{idem}\vphi_q^\al\star\vphi_q^\al=\frac{1}{d_\al}\vphi_q^\al, \ \ \ \al\in\Irr.\end{equation}
By weak* density of $\mc{A}$ in $\LIQ$ it follows that
$\overline{\text{alg}}\{\vphi_q^\al\mid\al\in\Irr\}$ is a closed ideal in $\mathcal{Z}(\LOQ)$, the center of $\LOQ$. Below we establish the reverse inclusion for a large class of compact quantum groups.

From the Peter--Weyl relations one easily sees that
\begin{equation}\label{eq:inner-on-characters}
\la\Lm_\vphi(\chi^\al),\Lm_\vphi(\chi^\be)\ra=\delta_{\al\be}=\la\Lm_\vphi(\chi_q^\al),\Lm_\vphi(\chi_q^\be)\rangle, \ \ \ \al,\be\in\Irr,
\end{equation}
so spatially, there is no difference between the subspaces of $\LTQ$ generated by $\{\Lm_\vphi(\chi^\al)\}$ and $\{\Lm_\vphi(\chi_q^\al)\}$. The multiplicative structure of these spaces is quite different, however, as we now investigate.

\section{Central Subalgebras}

Let $\G$ be a compact quantum group, and let $\beta_2 :\LOQ\rightarrow\BLTQ$ be the conjugation representation of $\G$, defined by
$$\beta_2 (f)=(f\ten\id)(W(1\ten U^*)W(1\ten U)), \ \ \ f\in\LOQ,$$
where $U=\h{J}J$, and $J$ and $\h{J}$ are the conjugate linear isometries arising from the GNS constructions of $\vphi$ and $\h{\vphi}$, respectively. One can easily verify that
$$W(1\ten U^*)W(1\ten U)\in\LIQ\oten\BLTQ$$
is a unitary co-representation of $\G$, so that $\beta_2 $ is indeed a homomorphism. Let $Z\LTQ$ denote the set of fixed vectors under $\beta_2 $, i.e., those $\xi\in\LTQ$ satisfying $\beta_2 (f)\xi=\la f,1\ra\xi$ for all $f\in\LOQ$. We call $Z\LTQ$ the space
of \textit{central vectors} of $\LTQ$, and in what follows we study its connection to the center $\mc{Z}(\LTQ)$. We begin with a few lemmas.

\begin{lem}\label{l:conj-formula} Let $\G$ be a compact quantum group. Then for $x\in\mc{A}$,
$$\beta_2(\vphi)\Lphi(x)=\sum_{i=1}^n\lm(y_i\cdot \vphi)\Lphi(x_i)=\sum_{i=1}^n\Lphi((\vphi\ten\id)(y_i\ten 1)\Gam(x_i))$$
where $\Gamma(x)=\sum_{i=1}^nx_i\ten y_i$.
\end{lem}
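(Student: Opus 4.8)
The goal is the identity
$$\beta_2(\vphi)\Lphi(x)=\sum_i \lm(y_i\cdot\vphi)\Lphi(x_i)=\sum_i\Lphi((\vphi\ten\id)(y_i\ten 1)\Gam(x_i))$$
for $x\in\mc A$, writing $\Gam(x)=\sum_i x_i\ten y_i$. The plan is to unwind the definition $\beta_2(\vphi)=(\vphi\ten\id)(W(1\ten U^*)W(1\ten U))$ by slicing the outer leg against $\vphi$, and then to let this operator act on the vector $\Lphi(x)\in\LTQ$. The key structural facts I want to exploit are (i) $W^*\Lphis(a\ten b)=\Lphis(\Gam(b)(a\ten 1))$, equivalently $W\Lphis(\Gam(b)(a\ten 1))=\Lphis(a\ten b)$; (ii) $U=\h J J$ relates the GNS maps, and on $\mc A$ one has a clean description of how $U$ acts (via $J\Lphi(x)=\Lphi(x^*)$ up to the modular pieces, combined with $\h J$); and (iii) the left regular representation satisfies $\lm(\om)\Lphi(b)=(\om\ten\id)(W)\Lphi(b)$, which when $\om = y\cdot\vphi$ unpacks via (i) to $\Lphi((\vphi\ten\id)((y\ten 1)\Gam(b)))$.

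**Main computation.** First I would write $\beta_2(\vphi)\Lphi(x)=(\vphi\ten\id)(W(1\ten U^*)W(1\ten U))\Lphi(x)$, interpreting the right-hand side as: apply $1\ten U$ to a suitable tensor, then $W$, then $1\ten U^*$, then $W$, then slice the first leg by $\vphi$. Concretely, for a generic $\eta\in\LTQ$ one has $(\vphi\ten\id)(T)\eta = (\vphi\ten\id)(T(\Lphi(1)\ten\eta))$-type expressions; since $\vphi$ is a state and $\Lphi(1)$ is its GNS cyclic vector, the slice $(\vphi\ten\id)(T)\eta$ equals $(\Lphi(1)\ten\eta \mapsto (\vphi\ten\id)$ applied appropriately$)$. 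I would then push $W(1\ten U)$ through on the vector $\Lphi(1)\ten\Lphi(x)$: using that $\mc A$ is a Hopf $*$-algebra with $\Gam(x)=\sum_i x_i\ten y_i$, the action of $1\ten U$ and then $W$ on $\Lphi(1)\ten\Lphi(x)$ should produce, after using the formula for $W$ on tensors of the form $\Lphis(\Gam(\cdot)(\cdot\ten 1))$, a sum $\sum_i \Lphi(z_i)\ten\Lphi(w_i)$ where the indices reorganize the coproduct. Applying $1\ten U^*$ and then $W$ again, and finally slicing the outer leg by $\vphi$, should collapse the expression to $\sum_i\Lphi((\vphi\ten\id)(y_i\ten 1)\Gam(x_i))$. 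The second equality claimed in the lemma, $\sum_i\lm(y_i\cdot\vphi)\Lphi(x_i)=\sum_i\Lphi((\vphi\ten\id)((y_i\ten 1)\Gam(x_i)))$, is then just the identity $\lm(b\cdot\vphi)\Lphi(a)=(b\cdot\vphi\ten\id)(W)\Lphi(a)=\Lphi((\vphi\ten\id)((b\ten 1)\Gam(a)))$ obtained directly from fact (i) above, which I would verify by pairing against $\Lphi(c)$ for $c\in\mc A$ and using the Peter--Weyl orthogonality relations (or more directly, the defining relation $W^*\Lphis(a\ten b)=\Lphis(\Gam(b)(a\ten 1))$ together with boundedness and density of $\mc A$).

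**Handling $U=\h J J$.** The delicate point is tracking the two conjugate-linear isometries $J$ and $\h J$ through the computation. On the Hopf $*$-algebra $\mc A$, $J$ implements (a twist of) the $*$-operation: $J\Lphi(x)$ is proportional to $\Lphi$ of $S(x)^*$ or $x^*$ depending on modular normalizations, and on characters/matrix coefficients the action of $J$ and $\h J$ is completely explicit in terms of the $F$-matrices $F^\al$. The strategy is to avoid computing $J$ and $\h J$ separately and instead observe that the combination $W(1\ten U^*)W(1\ten U)$ is designed so that the $J$, $\h J$ contributions either cancel or combine into the adjoint action; indeed this is precisely why $\beta_2$ is the conjugation representation. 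Concretely, I would use that $(1\ten U)W(1\ten U^*) = W^{\mathrm{op}}$-type conjugates (the standard identities relating $W$, the right fundamental unitary $V$, and the unitaries $J$, $\h J$), reducing $W(1\ten U^*)W(1\ten U)$ to a product that, after slicing by $\vphi$, acts on $\Lphi(x)$ exactly as $\lm$ applied to the right-slices of the coproduct of $x$. Alternatively — and this is probably the cleanest route — one verifies the formula on matrix coefficients $x=u^\al_{pq}$ directly: compute $\beta_2(\vphi)\Lphi(u^\al_{pq})$ using the explicit action of $W$, $U$ on the Peter--Weyl basis, compute the right-hand side $\sum_{k}\lm(u^\al_{kq}\cdot\vphi)\Lphi(u^\al_{pk})$ using the orthogonality relations, and check they agree; then extend by linearity to all of $\mc A$.

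**Main obstacle.** The principal difficulty is the bookkeeping around $U=\h JJ$ and the modular normalizations hidden in $\Lphi$: getting the $F$-matrix factors $\lm^\al_i$ to land correctly so that the final answer is the clean $\lm(y_i\cdot\vphi)\Lphi(x_i)$ with no stray scalars. I expect that pairing both sides against basis vectors $\Lphi(u^\be_{rs})$ and invoking the two Peter--Weyl orthogonality relations (one for $(u^\be)^*u^\al$, one for $u^\al(u^\be)^*$) is what forces the normalizations to match; the co-representation property $(\Gam\ten\id)(W(1\ten U^*)W(1\ten U))=$ (product of legs) and the pentagon relation \eqref{penta} are the tools that let one slide $W$ past the tensor factors without error. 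Once the matrix-coefficient identity is established the passage to general $x\in\mc A$ is immediate by linearity, and no density/continuity argument is needed since $\mc A$ is already the span of matrix coefficients.
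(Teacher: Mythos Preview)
Your outline shares the paper's starting point---unpack $\beta_2(\vphi)$ via the relations among $W$, $V$ and $U=\h J J$---but it stops short of the two specific manipulations that actually carry the paper's proof. First, the paper does not track $J$ and $\h J$ on vectors at all; it pairs $\beta_2(\vphi)\Lphi(x)$ against $\Lphi(a)$, uses the identity underlying your ``$W^{\mathrm{op}}$-type conjugates'' remark to replace the operator by $W\sigma V\sigma$, and immediately arrives at $\vphi\ten\vphi(\Gam(a^*)\Sigma\Gam(x))$ via the defining formulas for $W$ and $V$ on GNS vectors. Second---and this is the step your plan does not anticipate---to pass from that expression to $\sum_i\la\lm(y_i\cdot\vphi)\Lphi(x_i),\Lphi(a)\ra$ the paper invokes the \emph{strong left invariance} identity
\[
S\bigl((\id\ten\vphi)(\Gam(a^*)(1\ten x_i))\bigr)=(\id\ten\vphi)\bigl((1\ten a^*)\Gam(x_i)\bigr),
\]
together with $S(S(y)^*)^*=y$, then rewrites via the $o$-involution on $\LOQs$ and the relation $\lm(f^o)=\lm(f)^*$. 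This antipode trick is precisely what absorbs all the modular data and makes the $F$-matrix bookkeeping you worry about disappear; without it, your ``direct unwind'' would indeed get stuck on tracking $\lm^\al_i$ factors through $J$ and $\h J$.

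Your alternative route---verify both sides on matrix coefficients $u^\al_{pq}$ using Peter--Weyl and extend by linearity---is sound and would work, but it is genuinely different from the paper's coordinate-free argument: it trades the single antipode identity for explicit orthogonality computations, buying concreteness at the cost of the very bookkeeping you flag as the main obstacle. The paper's approach is cleaner here because it never touches a basis.
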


\begin{proof} We let $\sigma:\LTQ\ten\LTQ\rightarrow\LTQ\ten\LTQ$ denote the flip map. If $a\in\mc{A}$, we have
\begin{align*}\la\beta_2(\vphi)\Lphi(x),\Lphi(a)\ra&=\la W\sigma V\sigma\Lphis(1\ten x),\Lphis(1\ten a)\ra\\
&=\la\sigma\Lphis(\Gamma(x)),\Lphis(\Gamma(a))\ra\\
&=\vphi\ten\vphi(\Gamma(a^*)\Sigma\Gamma(x))\\
&=\vphi\ten\vphi(\Gamma(a^*)\Sigma\Gamma(x))\\
&=\sum_{i=1}^n \vphi\ten\vphi(\Gam(a^*)y_i\ten x_i)\\
&=\sum_{i=1}^ny_i\cdot\vphi((\id\ten\vphi)(\Gam(a^*)(1\ten x_i))).
\end{align*}
Since $S((\id\ten\vphi)(\Gam(a^*)(1\ten x_i)))=(\id\ten\vphi)((1\ten a^*)\Gam(x_i))$, and $S(S(y)^*)^*=y$ for all $y\in\mc{A}$, we have
$(\id\ten\vphi)(\Gam(a^*)(1\ten x_i))=S((\id\ten\vphi)(\Gam(x_i^*)(1\ten a)))^*$. Continuing,
\begin{align*}
\la\beta_2(\vphi)\Lphi(x),\Lphi(a)\ra&=\sum_{i=1}^ny_i\cdot \vphi(S((\id\ten\vphi)(\Gam(x_i^*)(1\ten a)))^*)\\
&=\sum_{i=1}^n\overline{(y_i\cdot\vphi)^o((\id\ten\vphi)(\Gam(x_i^*)(1\ten a)))}\\
&=\sum_{i=1}^n\overline{\vphi(((y_i\cdot\vphi)^o\ten\id)(\Gam(x_i^*))a)}\\
&=\sum_{i=1}^n \la\Lphi(((y_i\cdot\vphi)^{o^*}\ten\id)\Gam(x_i)),\Lphi(a)\ra\\
&=\sum_{i=1}^n \la((y_i\cdot\vphi)^{o^*}\ten\id)(W^*)\Lphi(x_i),\Lphi(a)\ra\\
&=\sum_{i=1}^n \la\lm((y_i\cdot\vphi)^o)^*\Lphi(x_i),\Lphi(a)\ra\\
&=\sum_{i=1}^n \la\lm(y_i\cdot\vphi)\Lphi(x_i),\Lphi(a)\ra.\\
\end{align*}
This establishes the first formula. The second follows from the general relation $((y\cdot \vphi)^{o^*}\ten\id)\Gam(x)=(\vphi\ten\id)((S(y)\ten 1)\Gam(x))$ valid for all $x,y\in\D(S)$, as is easily verified.
\end{proof}

\begin{lem}\label{l:antipodeformula1} Let $\G$ be a compact quantum group, and $x\in\mc{A}$. Then $x\cdot\vphi\in\LOQs$ with $(x\cdot\vphi)^o=S(x)^*\cdot\vphi$.
\end{lem}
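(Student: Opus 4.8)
The plan is to verify the defining condition of $\LOQs$ head‑on, with the explicit candidate $g := S(x)^*\cdot\vphi$; this lies in $\LOQ$ since $S(x)^*\in\mc{A}\subseteq\LIQ$. Unwinding the definitions of $f^*$ and of $x\cdot\vphi$, for every $y\in\mc{D}(S)$ one has
\[(x\cdot\vphi)^*\big(S(y)\big) = \overline{(x\cdot\vphi)\big(S(y)^*\big)} = \overline{\vphi\big(S(y)^*x\big)} = \vphi\big(x^*S(y)\big),\]
the last equality because $\vphi$ is a state. Hence the lemma is equivalent to the identity
\[\vphi\big(x^*S(y)\big) = \vphi\big(yS(x)^*\big) = \la S(x)^*\cdot\vphi,\,y\ra\qquad(x\in\mc{A},\ y\in\mc{D}(S)).\]

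First I would reduce this identity to the case $y\in\mc{A}$. Both sides are continuous in $y\in\mc{D}(S)$: the left side is the value at $S(y)$ of the bounded normal functional $(x\cdot\vphi)^*$ (which is just $z\mapsto\vphi(x^*z)$), and the right side is the value at $y$ of the bounded normal functional $S(x)^*\cdot\vphi$. Since $\mc{A}$ is a core for the antipode $S$ — being a dense, $\tau$‑invariant subspace consisting of analytic elements for the scaling group — it is enough to check the identity for $y\in\mc{A}$, where $S$ restricts to the algebraic antipode of the Hopf $*$‑algebra $\mc{A}$.

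For $y\in\mc{A}$ the element $yS(x)^*$ again lies in $\mc{A}$, and I would use three facts about $S|_{\mc{A}}$: it is an anti‑homomorphism; it satisfies $S(S(x)^*)=x^*$ (this is the relation $S(S(a)^*)^*=a$ already recorded in the proof of Lemma~\ref{l:conj-formula}); and $\vphi\circ S=\vphi$ on $\mc{A}$ (because $S=R\tau_{-i/2}$, with $\vphi$ both $R$‑invariant and $\tau$‑invariant, so that $\vphi(\tau_z(a))=\vphi(a)$ for all $z\in\C$ and all $a\in\mc{A}$ by analyticity). Then
\[\vphi\big(yS(x)^*\big) = \vphi\big(S(yS(x)^*)\big) = \vphi\big(S(S(x)^*)\,S(y)\big) = \vphi\big(x^*S(y)\big),\]
which is precisely the desired identity on $\mc{A}$. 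Combined with the first display, this shows that $g=S(x)^*\cdot\vphi$ satisfies the defining property of the involution, so $x\cdot\vphi\in\LOQs$ with $(x\cdot\vphi)^o=S(x)^*\cdot\vphi$.

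I do not anticipate a genuine obstacle: the heart of the argument is the one‑line computation above. The only two points deserving a careful sentence are the $R$‑ and $\tau$‑invariance of $\vphi$ on the analytic elements of $\mc{A}$ (yielding $\vphi\circ S=\vphi$ there) and the passage from $\mc{A}$ to all of $\mc{D}(S)$, which rests on $\mc{A}$ being a core for $S$ together with the normality of the functionals $(x\cdot\vphi)^*$ and $S(x)^*\cdot\vphi$.
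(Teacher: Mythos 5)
Your proof is correct and rests on the same computation as the paper's: the identity $\vphi(yS(x)^*)=\vphi(x^*S(y))$, derived from the anti-multiplicativity of $S$, the relation $S(S(x)^*)=x^*$, and the invariance $\vphi\circ S=\vphi$. The only difference is organizational: the paper runs this computation directly for all $y\in\mc{D}(S)$ (using these antipode properties on the domain itself), while you first reduce to $y\in\mc{A}$ via the core property of $\mc{A}$ and the normality of the two functionals and then compute inside the Hopf $*$-algebra --- a harmless, if anything slightly more careful, variant.
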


\begin{proof} First note that $\vphi=\vphi\circ S$ on $\mc{D}(S)$. Then for $y\in\mc{D}(S)$ we have
\begin{align*}\la S(x)^*\cdot\vphi,y\ra&=\vphi(yS(x)^*)=\overline{\vphi(S(x)y^*)}=\overline{\vphi(S(x)S(S(y)^*))}\\
&=\overline{\vphi(S(S(y)^*x))}=\overline{\vphi(S(y)^*x)}=\overline{\la x\cdot\vphi,S(y)^*\ra}.\end{align*}
Thus, $x\cdot\vphi\in\LOQs$ with $(x\cdot\vphi)^o=S(x)^*\cdot\vphi$.
\end{proof}

\begin{lem}\label{l:matrixunits} Let $\G$ be a compact quantum group. Then
$$\{d_\al\sqrt{\lm_i^\al\lm_j^\al}\lm(\vphi_{ij}^\al)\mid\al\in\Irr, 1\leq i,j\leq n_\al\}$$
forms a set of matrix units for the von Neumann algebra $\LIQH$. In particular, for every $\xi\in\LTQ$ we have
\begin{equation*}\xi=\sum_{\al\in\textnormal{Irr}(\G)}d_\al \Lm_\vphi(\chi_q^\al)\star\xi\end{equation*}
where the sum converges in $\LTQ$.
\end{lem}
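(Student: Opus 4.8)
The plan is to verify, directly from the Peter--Weyl orthogonality relations, that the elements $e^\al_{ij}:=d_\al\sqrt{\lm^\al_i\lm^\al_j}\,\lm(\vphi^\al_{ij})$ form a complete system of matrix units for $\LIQH$, and then to deduce the expansion of an arbitrary $\xi\in\LTQ$ by recognising $\xi\mapsto d_\al\Lphi(\chi^\al_q)\star\xi$ as an orthogonal projection. For the involution relation I would use Lemma~\ref{l:antipodeformula1}: $\vphi^\al_{ij}\in\LOQs$ with $(\vphi^\al_{ij})^o=S(u^\al_{ij})^*\cdot\vphi$, and since the Hopf $*$-algebra axioms and unitarity of $u^\al$ give $S(u^\al_{ij})=(u^\al_{ji})^*$, this reads $(\vphi^\al_{ij})^o=\vphi^\al_{ji}$; as $\lm$ restricts to a $*$-homomorphism on $\LOQs$ (so $\lm(f)^*=\lm(f^o)$), we get $\lm(\vphi^\al_{ij})^*=\lm(\vphi^\al_{ji})$, hence $(e^\al_{ij})^*=e^\al_{ji}$.

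For the multiplicativity $e^\al_{ij}e^\be_{kl}=\delta_{\al\be}\delta_{jk}e^\al_{il}$, I would compute $\vphi^\al_{ij}\star\vphi^\be_{kl}$ in $\LOQ$. Pairing with $u^\gamma_{pq}$ and expanding $\Gam(u^\gamma_{pq})=\sum_r u^\gamma_{pr}\ten u^\gamma_{rq}$ reduces everything to the scalars $\la\vphi^\al_{ij},u^\gamma_{pr}\ra=\vphi(u^\gamma_{pr}u^\al_{ij})$; using the orthogonality relations together with the conjugate-representation data ($\lm^{\overline{\al}}_i=(\lm^\al_i)^{-1}$, $d_{\overline{\al}}=d_\al$, $u^{\overline{\al}}_{ij}=\sqrt{\lm^\al_i/\lm^\al_j}\,(u^\al_{ij})^*$) this scalar equals $\delta_{\gamma,\overline{\al}}\,\delta_{ip}\delta_{jr}\,(\sqrt{\lm^\al_i\lm^\al_j}\,d_\al)^{-1}$. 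Summing over $r$ yields $\vphi^\al_{ij}\star\vphi^\be_{kl}=\delta_{\al\be}\delta_{jk}(d_\al\lm^\al_j)^{-1}\vphi^\al_{il}$ (equality of elements of $\LOQ$, since the two sides are normal functionals agreeing on the weak* dense subalgebra $\mc{A}$); applying the homomorphism $\lm$ and inserting the normalising factors gives the matrix-unit relation.

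For completeness I would first show $\mathrm{span}\{\vphi^\al_{ij}\}$ is norm dense in $\LOQ$: if $x\in\LIQ$ annihilates every $\vphi^\al_{ij}$, then $\vphi(x u^\al_{ij})=0$ for all $\al,i,j$, hence $\vphi(xy)=0$ for all $y\in\LIQ$ by weak* density of $\mc{A}$ and normality of $\vphi$, and taking $y=x^*$ with $\vphi$ faithful forces $x=0$. Since $\lm$ is contractive, $\{e^\al_{ij}\}$ is then total in $\lm(\LOQ)$, so $\{e^\al_{ij}\}''=\lm(\LOQ)''=\LIQH$, i.e.\ the $e^\al_{ij}$ generate $\LIQH$. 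From the relations already proved, each $e^\al_{ii}$ is a projection and distinct $e^\al_{ii},e^\be_{jj}$ are orthogonal, so the finite partial sums $p_{\mc{F}}=\sum_{\al\in\mc{F}}\sum_i e^\al_{ii}$ ($\mc{F}\subseteq\Irr$ finite) increase to a projection $p$ satisfying $p\,e^\be_{kl}=e^\be_{kl}$ for all $\be,k,l$; as the $e^\be_{kl}$ generate $\LIQH$ and left multiplication by $p$ is strongly continuous, $p=1$, i.e.\ $\sum_{\al,i}e^\al_{ii}=1$ strongly. This exhibits $\{e^\al_{ij}\}$ as a complete system of matrix units for $\LIQH$.

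For the final assertion, the orthogonality relation recorded just before \eqref{idem}, applied with $f=\vphi^\gamma_{mn}$ and using that $\la\vphi^\gamma_{mn},(u^\be_{kl})^*\ra$ vanishes unless $\be=\gamma$, yields $\vphi^\al_q\star\vphi^\gamma_{mn}=\frac{\delta_{\al\gamma}}{d_\al}\vphi^\gamma_{mn}$. Hence, via the elementary-tensor formula $\Lphi(x)\star\Lphi(y)=a((x\cdot\vphi)\star(y\cdot\vphi))$ and $a(u^\gamma_{mn}\cdot\vphi)=\Lphi(u^\gamma_{mn})$, we obtain $d_\al\Lphi(\chi^\al_q)\star\Lphi(u^\gamma_{mn})=\delta_{\al\gamma}\Lphi(u^\gamma_{mn})$. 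Thus the bounded operator $P_\al:\xi\mapsto d_\al\Lphi(\chi^\al_q)\star\xi$ on $\LTQ$ (left multiplication in the Banach algebra $\LTQ$) is the identity on $K_\al:=\overline{\mathrm{span}}\{\Lphi(u^\al_{ij})\mid 1\le i,j\le n_\al\}$ and zero on each $K_\be$ with $\be\neq\al$; since $\{\sqrt{d_\al\lm^\al_i}\,\Lphi(u^\al_{ij})\}$ is an orthonormal basis of $\LTQ$, the $K_\al$ are mutually orthogonal with dense span, so $P_\al$ equals the orthogonal projection onto $K_\al$ and $\sum_{\al\in\Irr}P_\al=\id_{\LTQ}$ strongly. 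Evaluating at $\xi$ gives the claimed norm-convergent expansion $\xi=\sum_{\al\in\Irr}d_\al\Lphi(\chi^\al_q)\star\xi$. The step I expect to cost the most effort is the bookkeeping of the $F$-matrix data (the weights $\lm^\al_i$ and the passage to conjugate co-representations) in the product formula for $\vphi^\al_{ij}\star\vphi^\be_{kl}$; the remaining inputs---faithfulness and normality of the Haar state, the strong-continuity argument for $p=1$, and the fact that $\lm$ is a $*$-homomorphism on $\LOQs$---are standard.
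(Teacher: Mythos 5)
Your proposal is correct and follows essentially the paper's route: the matrix-unit relations $(e^\al_{ij})^*=e^\al_{ji}$ and $e^\al_{ij}e^\be_{kl}=\delta_{\al\be}\delta_{jk}e^\al_{il}$ are derived from Lemma \ref{l:antipodeformula1} together with the Peter--Weyl orthogonality relations, and the expansion of $\xi$ comes from the resulting projections $d_\al\lm(\vphi_q^\al)$ summing strongly to the identity. The only cosmetic differences are that the paper pairs $\vphi^\al_{ij}\star\vphi^\be_{kl}$ against $(u^\gamma_{mn})^*$, which avoids your (correctly executed) conjugate-representation bookkeeping, and that you make the completeness $\sum_{\al,i}e^\al_{ii}=1$ and the norm-convergent expansion of $\xi$ explicit, whereas the paper leaves these implicit in the centrality of $z_\al=d_\al\lm(\vphi_q^\al)$ and the decomposition $\LIQH\cong\bigoplus_{\al}M_{n_\al}(\C)$.
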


\begin{proof} Let $e_{ij}^\al:=d_\al\sqrt{\lm_i^\al\lm_j^\al}\lm(\vphi_{ij}^\al)$ for $1\leq i,j\leq n_\al$ and $\al\in\Irr$. By Lemma \ref{l:antipodeformula1}, $\vphi_{ij}^\al\in\LOQs$ and $\vphi_{ij}^{\al^o}=S(u_{ij}^\al)^*\cdot\vphi=u_{ji}^\al\cdot\vphi=\vphi_{ji}^\al$.
Since $\lm$ is involutive on $\LOQs$ \cite[Proposition 2.4]{KV2}, we have $e_{ij}^{\al*}=e_{ji}^\al$. Then
\begin{align*}
\la\vphi_{ij}^\al\star\vphi_{kl}^\be,u_{mn}^{\gamma^*}\ra&=\sum_{r=1}^{n_\gamma}\la\vphi_{ij}^\al\ten\vphi_{kl}^\be,u_{mr}^{\gamma^*}\ten u_{rn}^{\gamma^*}\ra=\sum_{r=1}^{n_\gamma}\vphi(u_{mr}^{\gamma^*}u_{ij}^\al)\vphi(u_{rn}^{\gamma^*}u_{kl}^\be)\\
&=\frac{\delta_{\al\gamma} \delta_{\be\gamma}\delta_{im}\delta_{jk}\delta_{nl}}{\lm_k^\al \lm_i^\al d_\al^2}\\
&=\frac{\delta_{\al\be}\delta_{kj}}{\lm_k^\al d_\al}\la\vphi_{il}^\al,u_{mn}^{\gamma^*}\ra.
\end{align*}
It follows that $e_{ij}^\al e_{kl}^\be=d_\al d_\be\sqrt{\lm_i^\al\lm_j^\al}\sqrt{\lm_k^\be\lm_l^\be}\lm(\vphi_{ij}^\al\star\vphi_{kl}^\be)=\delta_{\al\be}\delta_{kj}e_{il}^\al$.
By above, we know that $\sum_{i=1}^{n_\al}e_{ii}^\al=d_\al\lm(\vphi^\al_q)\in\mc{Z}(\LIQH)$, hence $z_\al:=d_\al\lm(\vphi^\al_q)$ is a central projection in
$\LIQH$ acting as the identity on the factor $\{e^\al_{ij}\mid 1\leq i,j,\leq n\}\cong M_{n_\al}(\C)$. Thus,
$$\LIQH=\bigoplus_{\al\in\Irr}z_\al\LIQH\cong\bigoplus_{\al\in\Irr} M_{n_\al}(\C).$$
\end{proof}

We now show that the central vectors in $\LTQ$ are precisely the span of the characters, generalizing the well-known fact for compact groups.

\begin{prop}\label{p:fixedpoints} Let $\G$ be a compact quantum group. Then
$$Z\LTQ=\overline{\textnormal{span}}\{\Lm_\vphi(\chi^\al)\mid\al\in\textnormal{Irr}(\G)\}.$$
\end{prop}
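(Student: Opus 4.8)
The plan is to prove that $\beta_2(\vphi)$, the image of the Haar state under the conjugation representation, is an idempotent on $\LTQ$ whose range is precisely $\overline{\textnormal{span}}\{\Lm_\vphi(\chi^\al)\mid\al\in\Irr\}$, and that $Z\LTQ$ coincides with this range. First I would note that, $\G$ being compact, $\vphi$ is also right invariant, so $(\id\ten\vphi)\Gam(x)=\vphi(x)1$ and hence $f\star\vphi=\la f,1\ra\vphi$ for all $f\in\LOQ$; since $\beta_2$ is a homomorphism this gives $\beta_2(f)\beta_2(\vphi)=\la f,1\ra\beta_2(\vphi)$ for every $f\in\LOQ$. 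Two consequences are immediate: any vector $\beta_2(\vphi)\eta$ is fixed by every $\beta_2(f)$, and any $\xi\in Z\LTQ$ satisfies $\beta_2(\vphi)\xi=\la\vphi,1\ra\xi=\xi$. Therefore $Z\LTQ=\beta_2(\vphi)\LTQ$, and it remains to compute this range.

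The computational core is the identity
$$\beta_2(\vphi)\Lm_\vphi(u^\al_{ij})=\frac{\delta_{ij}}{\lm^\al_i d_\al}\,\Lm_\vphi(\chi^\al),\qquad \al\in\Irr,\ 1\le i,j\le n_\al.$$
To obtain it I would apply the first formula of Lemma \ref{l:conj-formula} to $x=u^\al_{ij}$: since $\Gam(u^\al_{ij})=\sum_k u^\al_{ik}\ten u^\al_{kj}$, we get $\beta_2(\vphi)\Lm_\vphi(u^\al_{ij})=\sum_k\lm(u^\al_{kj}\cdot\vphi)\Lm_\vphi(u^\al_{ik})$. Next I would pair each summand with an arbitrary Peter--Weyl basis vector $\Lm_\vphi(u^\be_{mn})$: writing $u^\al_{kj}\cdot\vphi=\om_{\Lm_\vphi(u^\al_{kj}),\Lm_\vphi(1)}$ and using $W^*\Lphis(1\ten z)=\Lphis(\Gam(z))$ one finds
$$\la\lm(u^\al_{kj}\cdot\vphi)\Lm_\vphi(u^\al_{ik}),\Lm_\vphi(u^\be_{mn})\ra=(\vphi\ten\vphi)\!\left(\Gam(u^\be_{mn})^*(u^\al_{kj}\ten u^\al_{ik})\right).$$
Expanding $\Gam(u^\be_{mn})^*=\sum_r(u^\be_{mr})^*\ten(u^\be_{rn})^*$ and invoking the orthogonality relations $\vphi((u^\be_{kl})^*u^\al_{ij})=\delta_{\al\be}\delta_{ik}\delta_{jl}/(\lm^\al_i d_\al)$ collapses the right side to $\delta_{ij}\delta_{\al\be}\delta_{mk}\delta_{nk}/(\lm^\al_k\lm^\al_i d_\al^2)$, so $\lm(u^\al_{kj}\cdot\vphi)\Lm_\vphi(u^\al_{ik})$ is orthogonal to every Peter--Weyl basis vector except $\Lm_\vphi(u^\al_{kk})$; dividing by $\norm{\Lm_\vphi(u^\al_{kk})}^2=(\lm^\al_k d_\al)^{-1}$ identifies it as $\delta_{ij}(\lm^\al_i d_\al)^{-1}\Lm_\vphi(u^\al_{kk})$, and summing over $k$ with $\chi^\al=\sum_k u^\al_{kk}$ gives the identity. (Alternatively one can use the second form of Lemma \ref{l:conj-formula} together with $S(u^\al_{ij})=(u^\al_{ji})^*$.)

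Granting the identity, the proposition is immediate. Summing over $i=j$ and using $\sum_i(\lm^\al_i)^{-1}=d_\al$ yields $\beta_2(\vphi)\Lm_\vphi(\chi^\al)=\Lm_\vphi(\chi^\al)$; hence, by the first paragraph, $\Lm_\vphi(\chi^\al)\in Z\LTQ$ for every $\al$, and as $Z\LTQ$ is norm-closed we get $\overline{\textnormal{span}}\{\Lm_\vphi(\chi^\al)\}\subseteq Z\LTQ$. Conversely, the identity shows $\beta_2(\vphi)$ carries the dense subspace $\textnormal{span}\{\Lm_\vphi(u^\al_{ij})\}$ into $\textnormal{span}\{\Lm_\vphi(\chi^\al)\}$, so by continuity $\beta_2(\vphi)\LTQ\subseteq\overline{\textnormal{span}}\{\Lm_\vphi(\chi^\al)\}$; since $\xi=\beta_2(\vphi)\xi$ for every $\xi\in Z\LTQ$, this gives the reverse inclusion, and therefore $Z\LTQ=\overline{\textnormal{span}}\{\Lm_\vphi(\chi^\al)\}$.

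The step I expect to be the main obstacle is the orthogonality bookkeeping behind the displayed identity. One must keep careful track of the antipode, which is concealed inside the expression $\lm(y\cdot\vphi)\Lm_\vphi(x)$ (effectively $(u^\al_{jk})^*$, not $u^\al_{kj}$ itself, enters the relevant $W$-slice), and verify that the $F$-matrix eigenvalues $\lm^\al_i$ and the various Kronecker deltas conspire to leave exactly the one-dimensional ``trace direction'' $\C\,\Lm_\vphi(\chi^\al)$ in each matrix block of $\LIQH$.
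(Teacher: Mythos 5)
Your proposal is correct and follows essentially the same route as the paper: both hinge on Lemma \ref{l:conj-formula} together with the Peter--Weyl orthogonality relations to establish the key identity $\beta_2(\vphi)\Lm_\vphi(u^\al_{ij})=\frac{\delta_{ij}}{\lm^\al_i d_\al}\Lm_\vphi(\chi^\al)$, and then identify $Z\LTQ$ with the range of $\beta_2(\vphi)$. The only (harmless) differences are cosmetic: you obtain ``range $=$ fixed points'' directly from $f\star\vphi=\la f,1\ra\vphi$ (which holds since $\vphi$ is both left and right invariant in the compact case) instead of the paper's observation that $\beta_2(\vphi)$ is a self-adjoint idempotent, and you get the reverse inclusion from $\beta_2(\vphi)$ mapping the dense span of matrix coefficients into $\mathrm{span}\{\Lm_\vphi(\chi^\al)\}$ rather than the paper's orthogonality argument.
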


\begin{proof} Since $\vphi^o=\vphi=\vphi^*$, by Lemma \ref{l:conj-formula} we have
\begin{align*}\la\beta_2 (\vphi)\Lm_\vphi(u^\al_{kl}),\Lm_\vphi(u^\be_{ij})\ra&=\sum_{n,m=1}^{n_\al}\la\Lphi((\vphi\ten\id)(u^{\al*}_{ln}u^\al_{km}\ten u^\al_{mn})),\Lphi(u^\be_{ij})\ra\\
&=\sum_{n,m=1}^{n_\al}\vphi((u^\al_{ln})^*u^\al_{km})\vphi((u^\be_{ij})^*u^\al_{mn})\\
&=\delta_{\al\be}\delta_{kl}\delta_{ij}\frac{1}{\lm^{\al}_i\lm^{\al}_k d_\al^2}\\
&=\delta_{kl}\frac{1}{\lm^\al_k d_\al}\la\Lm_{\vphi}(\chi^\al),\Lm_{\vphi}(u^\be_{ij})\ra.
\end{align*}
By density of irreducible coefficients, we obtain
\begin{equation}\label{conj}\beta_2 (\vphi)\Lm_\vphi(u^\al_{kl})=\frac{\delta_{kl}}{\lm_k^\al d_\al}\Lm_\vphi(\chi^\al).\end{equation}

Lemma \ref{l:matrixunits} implies that $\beta_2 (\vphi)$ is a self-adjoint idempotent, so
$\beta_2 (\vphi)$ is the orthogonal projection onto its fixed points, namely $Z\LTQ$. Equation (\ref{conj}) implies that $\beta_2 (\vphi)\Lm_\vphi(\chi^\al)=\Lm_\vphi(\chi^\al)$ for all $\alpha\in\Irr$ so that $\overline{\textnormal{span}}\{\Lm_\vphi(\chi^\al)\mid\al\in\textnormal{Irr}(\G)\}\subseteq Z\LTQ$.
Conversely, if $\xi\in Z\LTQ$ and $\la\xi,\Lm_{\vphi}(\chi^\al)\ra=0$
for all $\al$, then
$$\la\xi,\Lm_\vphi(u^\be_{kl})\ra=\la\xi,\beta_2 (\vphi)\Lm_\vphi(u^\be_{kl})\ra=\delta_{kl}\frac{1}{\lm^\al_k d_\al}\la\xi,\Lm_\vphi(\chi^\al)\ra=0$$
for all $\be\in\Irr$, $k,l=1,...,n_{\be}$. By density, we must have $\xi=0$.
\end{proof}

\begin{prop}\label{p:ZL2(G)}
 Let $\G$ be a compact quantum group. Then
$$\mathcal{Z}(\LTQ)=\overline{\textnormal{span}}\{\Lm_\varphi(\chi_q^\al)\mid\al\in\textnormal{Irr}(\G)\}.$$
\end{prop}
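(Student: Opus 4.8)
The plan is to identify $\mc{Z}(\LTQ)$ directly using the matrix-unit picture from Lemma \ref{l:matrixunits}. Recall that $b:\LTQ\to\LOQ$ is a contractive injective homomorphism and $\lm:\LOQ\to\BLTQ$ is an injective homomorphism, so $\lm\circ b:\LTQ\to\LIQH$ is an injective algebra homomorphism. The multiplication on $\LTQ$ was defined precisely so that $\lm(b(\xi))\Lphi(y)=\xi\star\Lm_\vphi(y)$ (this is the content of the formulas $b(a(f))=f$ and the definition of $\star$ on $\LTQ$). Hence $\xi\in\mc{Z}(\LTQ)$ if and only if $\lm(b(\xi))$ commutes with $\lm(b(\eta))$ for all $\eta\in\LTQ$, i.e. iff $\lm(b(\xi))$ lies in the commutant inside $\lm(b(\LTQ))''$.

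First I would pin down $\lm(b(\LTQ))''$. By Lemma \ref{l:matrixunits}, the set $\{e_{ij}^\al=d_\al\sqrt{\lm_i^\al\lm_j^\al}\,\lm(\vphi_{ij}^\al)\}$ is a full set of matrix units for $\LIQH=\bigoplus_\al M_{n_\al}(\C)$, and each $\vphi_{ij}^\al=u_{ij}^\al\cdot\vphi=b(\Lm_\vphi(u_{ij}^\al))$ by the identity $a(x\cdot\vphi)=\Lm_\vphi(x)$ recorded in the excerpt. So $\lm(b(\LTQ))$ contains every matrix unit $e_{ij}^\al$, and (being a norm-closed, or at least a linear space containing a dense-in-$\LTQ$ span) its weak* closure is all of $\LIQH$. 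Therefore $\mc{Z}(\LTQ)=\{\xi\in\LTQ\mid \lm(b(\xi))\in\mc{Z}(\LIQH)\}$, and $\mc{Z}(\LIQH)=\bigoplus_\al \C z_\al$ where $z_\al=d_\al\lm(\vphi_q^\al)$ is the central projection of the $\al$-block.

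Next I would translate this back. We have $z_\al=d_\al\lm(\vphi_q^\al)=d_\al\,\lm(b(\Lm_\vphi(\chi_q^\al)))=\lm(b(d_\al\Lm_\vphi(\chi_q^\al)))$, using $\chi_q^\al\cdot\vphi=\vphi_q^\al$ and $a(\chi_q^\al\cdot\vphi)=\Lm_\vphi(\chi_q^\al)$. By injectivity of $\lm\circ b$, an element $\xi\in\LTQ$ satisfies $\lm(b(\xi))\in\mc{Z}(\LIQH)$ precisely when $\xi$ lies in the weak*-style combination $\sum_\al c_\al d_\al\Lm_\vphi(\chi_q^\al)$; more carefully, from Lemma \ref{l:matrixunits} we have $\xi=\sum_\al d_\al\Lm_\vphi(\chi_q^\al)\star\xi$ with the sum converging in $\LTQ$, and $d_\al\Lm_\vphi(\chi_q^\al)\star\xi=(d_\al\lm(\vphi_q^\al))\,b$-image piece, which lies in $\C\,\Lm_\vphi(\chi_q^\al)$ exactly when $\lm(b(\xi))$ is central. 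So $\xi\in\mc{Z}(\LTQ)$ iff $d_\al\Lm_\vphi(\chi_q^\al)\star\xi\in\C\Lm_\vphi(\chi_q^\al)$ for every $\al$, which forces $\xi\in\overline{\mathrm{span}}\{\Lm_\vphi(\chi_q^\al)\}$. The reverse inclusion $\overline{\mathrm{span}}\{\Lm_\vphi(\chi_q^\al)\}\subseteq\mc{Z}(\LTQ)$ follows because $\lm(b(\Lm_\vphi(\chi_q^\al)))=\frac1{d_\al}z_\al$ is central, together with the orthogonality relation $\Lm_\vphi(\chi_q^\al)\star\Lm_\vphi(\chi_q^\be)=\delta_{\al\be}\frac1{d_\al}\Lm_\vphi(\chi_q^\al)$ read off from equation (\ref{idem}) via $b$, and continuity of multiplication.

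The main obstacle I anticipate is purely bookkeeping: making the passage between the $\LTQ$-multiplication, the map $b$, and the concrete matrix units airtight — in particular verifying that $\lm\circ b$ really intertwines the $\LTQ$-product with the operator product and that $\lm(b(\LTQ))$ is weak*-dense in $\LIQH$ (so that commutation with all of $\lm(b(\LTQ))$ is the same as centrality in $\LIQH$). Once that dictionary is fixed, the center of a direct sum of matrix algebras is diagonal and the result drops out. No deep new idea beyond Lemma \ref{l:matrixunits} is needed; the work is in assembling the identifications cleanly.
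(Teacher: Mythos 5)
Your proposal is correct and follows essentially the same route as the paper: both pass through the injective homomorphism $\lm\circ b$ into $\LIQH$, use Lemma \ref{l:matrixunits} to identify $\mc{Z}(\LIQH)$ as spanned by the central projections $z_\al=d_\al\lm(\vphi_q^\al)$, pull back by injectivity of $\lm$ and $b$, and conclude with the expansion $\xi=\sum_\al d_\al\Lm_\vphi(\chi_q^\al)\star\xi$. The only cosmetic difference is that the paper deduces $b(\xi)\in\mc{Z}(\LOQ)$ from the density of $b(\LTQ)$ in $\LOQ$, whereas you argue directly that $\lm(b(\LTQ))$ contains the matrix units and hence generates $\LIQH$; these amount to the same thing.
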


\begin{proof} Since $\{\vphi_q^\al\mid\al\in\Irr\}\subseteq\mathcal{Z}(\LOQ)$, it follows from the definition of convolution in $\LTQ$ that
$\overline{\text{span}}\{\Lm(\chi_q^\al)\mid\al\in\Irr\}\subseteq\mathcal{Z}(\LTQ)$. Conversely, suppose that $\xi\in\mathcal{Z}(\LTQ)$. Since $b(\LTQ)$ is dense
in $\LOQ$, it follows that $b(\xi)\in\mathcal{Z}(\LOQ)$, which in turn makes $\lm(b(\xi))\in\mathcal{Z}(\LIQH)$, so that
$d_\al\lm(\vphi_q^\al)\lm(b(\xi))=z_\al\lm(b(\xi))=c_\al z_\al=c_\al d_\al\lm(\vphi_q^\al)$ for each $\al\in\Irr$. By injectivity of $\lm$, we obtain
$d_\al\vphi_q^\al \star b(\xi)=c_\al d_\al \vphi_q^\al$. But $\vphi_q^\al=b(\Lphi(\chi_q^\al))$ so injectivity of $b$ implies
$d_\al \Lphi(\chi_q^\al)\star\xi=c_\al d_\al \Lphi(\chi_q^\al)$. Thus, by Lemma \ref{l:matrixunits}
\begin{equation}\label{eq:norm-2}
\xi=\sum_{\al\in\Irr}d_\al \Lm_\vphi(\chi_q^\al)\star\xi=\sum_{\al\in\Irr}c_\al d_\al \Lm_\vphi(\chi_q^\al)
\end{equation}
where the series converges in $\LTQ$.
\end{proof}

\begin{remark} Let $\beta_2':\LOQ\rightarrow\BLTQ$ be the representation defined by
$$\beta_2'(f)=U\beta_2(f) U^*=(f\ten\id)((1\ten U)W(1\ten U^*)W), \ \ \ f\in\LOQ.$$
Then $\beta_2'$ is another ``conjugation representation'' on $\LTQ$ whose fixed points are precisely $\mc{Z}(\LTQ)$. Thus, for non-Kac compact quantum groups, the conjugation representations $\beta_2$ and $\beta_2'$ distinguish the central vectors from the center of the Banach algebra $\LTQ$.

In the group setting, $W$ and $(1\ten U)W(1\ten U^*)$ belong to $\LI\oten VN(G)$ and $\LI\oten VN(G)'$, respectively, and therefore commute. Hence, for $f\in\LO$, we have
$$\beta_2(f)=\beta_2'(f)=\int_Gf(s)\lm(s)\rho(s)ds,$$
where $\lm$ and $\rho$ are the left and right regular representations of $G$, respectively, and $ds$ is the normalized Haar measure on $G$.
\end{remark}

We now establish the corresponding density theorems at the level of $\LIQ$ and $C(\G)$. In particular, we show that $ZC(\G)=\{x\in C(\G)\mid\Gam(x)=\Sigma\Gam(x)\}$ is precisely the closed linear span of the characters, which partially answers an open question of Woronowicz (see \cite[Proposition 5.11]{Wo}).

\begin{thrm}\label{t:Wo} Let $\G$ be a compact quantum group. Then
$$Z\LIQ=\{\chi^\al\mid\al\in\Irr\}''.$$
\end{thrm}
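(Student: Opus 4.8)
The plan is to prove the two inclusions separately, using the Hilbert-space result of Proposition~\ref{p:fixedpoints} to handle the harder one. For the easy inclusion, note first that each character $\chi^\al$ lies in $Z\LIQ$: indeed $\Gam(\chi^\al)=\sum_{i,k}u^\al_{ik}\ten u^\al_{ki}$ is manifestly symmetric under the flip $\Sigma$, so $\{\chi^\al\mid\al\in\Irr\}\subseteq Z\LIQ$. Since $Z\LIQ$ is defined by the weak*-closed condition $\Gam(x)=\Sigma\Gam(x)$ and $\Gam$ is weak*-continuous, $Z\LIQ$ is a weak*-closed subspace; it is also a von Neumann subalgebra of $\LIQ$ because $\Gam$ and $\Sigma$ are normal $*$-homomorphisms and $\Sigma\circ\Gam$ is again a homomorphism (so the equalizer is a subalgebra). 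Hence $\{\chi^\al\mid\al\in\Irr\}''\subseteq Z\LIQ$.

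For the reverse inclusion the key point is to pass from the operator algebra to the Hilbert space via the GNS map $\Lphi$. Since $\vphi$ is a faithful normal state on the compact quantum group $\LIQ$, the GNS representation is faithful and normal, and $\Lphi:\LIQ\to\LTQ$ is injective with weak*-weak continuous (indeed, $\sigma$-weakly to weakly continuous) behavior on bounded sets; moreover $\Lphi(\mc A)$ is dense in $\LTQ$. I would show that $\Lphi$ carries $Z\LIQ$ into $Z\LTQ=\overline{\textnormal{span}}\{\Lphi(\chi^\al)\}$ (Proposition~\ref{p:fixedpoints}). Concretely, if $x\in Z\LIQ$ then $\Gam(x)=\Sigma\Gam(x)$, and unwinding the formula for $\beta_2(\vphi)$ exactly as in the proof of Lemma~\ref{l:conj-formula} (where $\la\beta_2(\vphi)\Lphi(x),\Lphi(a)\ra$ was rewritten through $\vphi\ten\vphi(\Gamma(a^*)\Sigma\Gamma(x))$) shows $\beta_2(\vphi)\Lphi(x)=\Lphi(x)$ when $\Sigma\Gam(x)=\Gam(x)$; that is, $\Lphi(x)\in Z\LTQ$. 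Therefore $\Lphi(x)=\sum_\al c_\al\Lphi(\chi^\al)$ for scalars $c_\al$, the sum converging in $\LTQ$.

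It remains to upgrade this $L^2$-expansion to the statement that $x\in\{\chi^\al\}''$. Here I would use the central projections $z_\al=d_\al\lm(\vphi_q^\al)\in\mc Z(\LIQH)$ from Lemma~\ref{l:matrixunits}, or rather their analogue on $\LIQ$: the Haar state decomposes $\LIQ$ (as a von Neumann algebra) into the $\ell^\infty$-direct sum $\bigoplus_\al B(H_\al)$, and the minimal central projection $p_\al$ supporting the block $B(H_\al)$ lies in $\LIQ$. One checks that $\chi^\al p_\al$ is, inside the factor $B(H_\al)\cong M_{n_\al}(\C)$, a nonzero scalar multiple of the identity, so $p_\al\in\{\chi^\al\}''$ for every $\al$, and hence every element of $\bigoplus_\al\C p_\al$ lies in the von Neumann algebra generated by the characters. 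Finally, if $x\in Z\LIQ$ then applying the block projections to the identity $\Lphi(x)=\sum_\al c_\al\Lphi(\chi^\al)$ (using that $\Lphi$ intertwines left multiplication by $p_\al$ with the corresponding orthogonal projection on $\LTQ$, which acts as the identity on each $\Lphi(\chi^\al)$) shows $p_\al x = c_\al\,\chi^\al p_\al/\text{(normalizing constant)}$, i.e. $x=\sum_\al c_\al'\,\chi^\al p_\al$ $\sigma$-weakly; since each summand lies in $\{\chi^\al\}''$ and the partial sums are uniformly bounded (as $x$ is bounded and the $p_\al$ are orthogonal central projections), $x\in\{\chi^\al\}''$.

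The main obstacle is the last paragraph: establishing that the $L^2$-decomposition $\Lphi(x)=\sum c_\al\Lphi(\chi^\al)$ genuinely forces a $\sigma$-weakly convergent decomposition of $x$ \emph{as a bounded operator}, rather than merely an $L^2$-identity. This requires knowing that $x$ is block-diagonal with respect to the Peter--Weyl decomposition of $\LIQ$ — which follows from $x\in Z\LIQ$ and the fact that each block $B(H_\al)$ has one-dimensional center, so the only central elements of each block are the scalars $c_\al p_\al$ — and that the resulting series $\sum_\al c_\al p_\al$ is bounded. The boundedness is automatic once we identify $x$ with an element of $\bigoplus_\al\C p_\al\subseteq\prod_\al B(H_\al)$; the coefficients $c_\al$ agree with those from the $L^2$-expansion because $\Lphi$ is injective and isometric on each finite-dimensional block. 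Everything else is a matter of carefully transporting the fixed-point computation of Lemma~\ref{l:conj-formula} from $\mc A$ to all of $Z\LIQ$ by normality and weak*-density.
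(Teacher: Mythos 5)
Your first two paragraphs are sound and run parallel to the paper: the easy inclusion, and the computation showing that $\Gam(x)=\Sigma\Gam(x)$ forces $\beta_2(\vphi)\Lphi(x)=\Lphi(x)$, hence $\Lphi(Z\LIQ)\subseteq Z\LTQ$ by Proposition \ref{p:fixedpoints}. The gap is in your final step. You assert that $\LIQ$ itself decomposes as an $\ell^\infty$-direct sum $\bigoplus_\al B(H_\al)$ with minimal central projections $p_\al\in\LIQ$ supporting the blocks. That is the structure of the \emph{dual} algebra $\LIQH$ (Lemma \ref{l:matrixunits}, where the projections $z_\al=d_\al\lm(\vphi_q^\al)$ lie in $\mc{Z}(\LIQH)$ and act on $\LTQ$), not of $\LIQ$. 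For instance, for $\G$ a nontrivial compact group, $\LIQ=L^\infty(G)$ is diffuse and has no minimal projections at all; the Peter--Weyl ``blocks'' $\mathrm{span}\{u^\al_{ij}\}$ are coalgebra blocks of $\LTQ$, not ideals or subalgebras of $\LIQ$ (a product $u^\al_{ij}u^\be_{kl}$ spreads over the irreducible constituents of $\al\ten\be$). Consequently ``left multiplication by $p_\al$'', the identification of $x$ with an element of $\bigoplus_\al\C p_\al$, and the claim that each block of $\LIQ$ has one-dimensional center have no meaning, and your mechanism for upgrading the $L^2$-expansion $\Lphi(x)=\sum_\al c_\al\Lphi(\chi^\al)$ to an operator-algebra statement collapses.

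That upgrade is precisely the genuine difficulty: for an inclusion of von Neumann algebras $N\subseteq M$ with faithful normal state $\vphi$, the condition $\Lphi(x)\in\overline{\Lphi(N)}$ does not in general imply $x\in N$; one needs a $\vphi$-preserving conditional expectation onto $N$ (equivalently, suitable modular invariance). The paper supplies exactly this: equations (\ref{e:N}) and (\ref{eq:inner-on-characters}) give traciality of $\vphi$ on $\{\chi^\al\mid\al\in\Irr\}''$, so $\Lphi(x)\mapsto\Lphi(x^*)$ is isometric on $Z\LTQ$, and an approximation argument using $Z\LTQ=\overline{\Lphi(Z\LIQ)}$ shows that $\vphi$ is a faithful trace on all of $Z\LIQ$; this yields a $\vphi$-preserving conditional expectation $E:Z\LIQ\rightarrow\{\chi^\al\mid\al\in\Irr\}''$ satisfying $\Lphi(E(x))=\beta_2(\vphi)\Lphi(x)=\Lphi(x)$, whence $E(x)=x$ by faithfulness of $\vphi$. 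Since $\G$ is not assumed to be Kac, the traciality of $\vphi$ on $Z\LIQ$ is a real point that cannot be skipped; your argument would need to be repaired along these (or equivalent modular-theoretic) lines rather than via a nonexistent block decomposition of $\LIQ$.
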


\begin{proof} The inclusion $\{\chi^\al\mid\al\in\Irr\}''\subseteq Z\LIQ$ is clear. Let $x\in Z\LIQ$. Then for any $y\in\LIQ$
\begin{align*}\la\beta_2(\vphi)\Lphi(x),\Lphi(y^*)\ra&=\la W\sigma V\sigma\Lphis(1\ten x),\Lphis(1\ten y^*)\ra\\
&=\la\sigma\Lphis(\Gamma(x)),\Lphis(\Gamma(y^*))\ra\\
&=\vphi\ten\vphi(\Gam(y)\Sigma\Gam(x))=\vphi\ten\vphi(\Gam(yx))=\vphi(yx)\\
&=\la\Lphi(x),\Lphi(y^*)\ra.\end{align*}
It follows that $\beta_2(\vphi)\Lphi(x)=\Lphi(x)$. Hence $\Lphi(Z\LIQ)\subseteq Z\LTQ$ by Proposition \ref{p:fixedpoints}, and $Z\LTQ=\overline{\Lphi(Z\LIQ)}^{\norm{\cdot}}$, as the reverse inclusion is clear.

Note that equations (\ref{e:N}) and (\ref{eq:inner-on-characters}) entail the traciality of $\vphi$ on the von Neumann algebra $\{\chi^\al\mid\al\in\Irr\}''$. In particular, the map $Z\LTQ\ni \Lphi(x)\mapsto \Lphi(x^*)\in Z\LTQ$ is an isometry. Given, $x,y\in Z\LIQ$, take sequences $(x_n)$ and $(y_m)$ in $\mathrm{span}\{\chi^\al\mid\al\in\Irr\}$ such that $\Lphi(x_n)\rightarrow \Lphi(x)$ and $\Lphi(y_m)\rightarrow\Lphi(y)$. Then
\begin{align*}\vphi(x^*y)&=\la\Lphi(y),\Lphi(x)\ra=\lim_n\la\Lphi(y_n),\Lphi(x)\ra=\lim_n\lim_m\la\Lphi(y_n),\Lphi(x_m)\ra\\
&=\lim_n\lim_m\vphi(x_m^*y_n)=\lim_n\lim_m\vphi(y_nx_m^*)=\lim_n\lim_m\la \Lphi(x_m^*),\Lphi(y_n^*)\ra\\
&=\lim_n\la\Lphi(x^*),\Lphi(y_n^*)\ra=\vphi(yx^*).\end{align*}
Thus, $\vphi$ is a faithful trace on $Z\LIQ$, so there is a unique conditional expectation $E:Z\LIQ\rightarrow\{\chi^\al\mid\al\in\Irr\}''$ satisfying $\Lphi(E(x))=\beta_2(\vphi)\Lphi(x)=\Lphi(x)$, $x\in Z\LIQ$. Then $E(x)=x$, and $Z\LIQ\subseteq\{\chi^\al\mid\al\in\Irr\}''$.
\end{proof}

\begin{cor} Let $\G$ be a compact quantum group. Then
$$ZC(\G)=\overline{\mathrm{span}}\{\chi^\al\mid\al\in\Irr\}.$$
\end{cor}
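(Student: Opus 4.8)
The plan is to deduce the corollary from Theorem~\ref{t:Wo} by transporting the information at the level of $\LTQ$ to the $C^*$-level. The inclusion $\overline{\mathrm{span}}\{\chi^\al\}\subseteq ZC(\G)$ is immediate: each $\chi^\al$ lies in the norm-dense Hopf $*$-algebra $\mc{A}\subseteq C(\G)$, and from $\Gam(u^\al_{ij})=\sum_{k}u^\al_{ik}\ten u^\al_{kj}$ one reads off $\Sigma\Gam(\chi^\al)=\sum_{i,k}u^\al_{ik}\ten u^\al_{ki}=\Gam(\chi^\al)$, so $\chi^\al\in ZC(\G)$; since $ZC(\G)$ is norm-closed, the closed span sits inside it. For the reverse inclusion I would use $ZC(\G)=C(\G)\cap Z\LIQ$, so that Theorem~\ref{t:Wo} places every $x\in ZC(\G)$ in $\{\chi^\al\}''$; the proof of that theorem also gives that $\Lphi$ maps $ZC(\G)$ into $Z\LTQ=\overline{\mathrm{span}}\{\Lphi(\chi^\al)\}$ with $\beta_2(\vphi)$ acting as the identity, and that $\vphi$ restricts to a faithful \emph{tracial} state on $Z\LIQ$, so that $Z\LTQ$ is, unitarily, the standard-form Hilbert space $L^2(Z\LIQ,\vphi)$, on which the left action of $Z\LIQ$ is isometric.

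Fix $x\in ZC(\G)$ and put $c_\al:=\la\Lphi(x),\Lphi(\chi^\al)\ra$, so that by Proposition~\ref{p:fixedpoints}, $\Lphi(x)=\sum_\al c_\al\Lphi(\chi^\al)$ in $\LTQ$ with $\sum_\al|c_\al|^2=\vphi(x^*x)<\infty$. Applying the bounded idempotents $f\mapsto d_\al\,\vphi_q^\al\star f$ on $\LOQ$ (the relations preceding \eqref{idem} and Lemma~\ref{l:matrixunits}), pulled back along the intertwiner $y\mapsto y\cdot\vphi$ between $C(\G)$ and $\LOQ$, one finds that the $\al$-isotypic component $p_\al(x)$ of $x$ equals $c_\al\chi^\al$. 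Thus $x$ and the formal series $\sum_\al c_\al\chi^\al$ have identical Fourier data; since this data determines an element of $C(\G)$ uniquely (by norm-density of $\mc{A}$), it remains only to see that $\sum_\al c_\al\chi^\al$ reconstructs $x$ in $C(\G)$-norm.

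For this last step I would use Ces\`aro (Fej\'er) summation inside the commutative $C^*$-algebra generated by the characters. On $Z\LTQ$ each $\chi^\al$ acts by the fusion operator $\Lphi(\chi^\be)\mapsto\sum_\gamma N^\gamma_{\al\be}\Lphi(\chi^\gamma)$; the fusion ring carries the multiplicative dimension functional $d_\al$ (with $\sum_\gamma N^\gamma_{\al\be}d_\gamma=d_\al d_\be$), so it admits a net of finitely supported weights $w_i\colon\Irr\to[0,1]$, $w_i\to1$ pointwise, whose multipliers $F_i=\sum_\al w_i(\al)\,(d_\al\vphi_q^\al\star\,\cdot\,)$ are unital completely positive on $Z\LIQ$, hence contractive on $ZC(\G)$, and satisfy $F_i(x)=\sum_\al w_i(\al)c_\al\chi^\al\in\mathrm{span}\{\chi^\al\}$. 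The crux — and the step I expect to be the real obstacle — is to show $\|F_i(x)-x\|\to0$: this is Fej\'er's theorem for $x$ \emph{in the reduced algebra}, and to push it through one must exploit the faithfulness of $\vphi$ on $C(\G)$, which identifies $\|h\|_{C(\G)}$ with $\|h\|_{\BLTQ}$ for $h\in ZC(\G)$ and lets one run the Ces\`aro averaging against a norm-dense supply of $\mc{A}$-approximants of $x$ while remaining inside the closed fusion $C^*$-algebra; one must in particular rule out that passing to the reduced $C(\G)$ creates central elements outside $\overline{\mathrm{span}}\{\chi^\al\}$. (In the extreme case $\G=\widehat{\Gamma}$ with $\Gamma$ discrete this last step is vacuous, since then $\chi^g=\lambda_g$ and $ZC(\G)=C^*_r(\Gamma)=\overline{\mathrm{span}}\{\chi^g\}$ outright.)
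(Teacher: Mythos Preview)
Your approach has a genuine gap, and it is precisely the step you flag as ``the real obstacle''. The assertion that the fusion ring ``admits a net of finitely supported weights $w_i$ whose multipliers $F_i$ are unital completely positive on $Z\LIQ$'' does not follow from the mere existence of the dimension function; it is an amenability-type approximation property for the fusion algebra. In the case $\G=\widehat{\Gamma}$ with $\Gamma$ discrete and non-amenable, $Z\LIQ=L(\Gamma)$ and no such net of finitely supported UCP multipliers exists --- you sidestep this by noting the conclusion is trivial there, but that only shows the Fej\'er route is not a uniform argument. For general $\G$ the existence of such $F_i$ is essentially the central ACPAP discussed later in the paper, which is an open hypothesis, not a theorem. (A smaller issue: the $C^*$-algebra generated by the characters need not be commutative --- the fusion coefficients satisfy $N^{\gamma}_{\al\be}=N^{\gamma}_{\be\al}$ only when the representation category is braided, which fails e.g.\ for $U_n^+$.)

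The paper avoids any approximation argument by a direct, soft device. Set $Z\LOQ:=\overline{\mathrm{span}}\{\vphi^\al\}$; since $\vphi$ is a faithful normal trace on $Z\LIQ$ (Theorem~\ref{t:Wo}), one has $Z\LIQ\cong (Z\LOQ)^*$, and the restriction map $r:\LIQ\to Z\LIQ$, $x\mapsto x|_{Z\LOQ}$, is automatically completely contractive and weak*--weak* continuous. A one-line computation with the orthogonality relations gives $r(u^\al_{ij})=\frac{\delta_{ij}\lm_i^\al}{d_\al}\chi^\al$, so $r(C(\G))\subseteq\overline{\mathrm{span}}\{\chi^\al\}$ by norm density of $\mc{A}$. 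Since $r(\chi^\al)=\chi^\al$ and $r$ is weak*-continuous, Theorem~\ref{t:Wo} forces $r=\id$ on $Z\LIQ$; hence for $x\in ZC(\G)$ one has $x=r(x)\in\overline{\mathrm{span}}\{\chi^\al\}$. The point is that $r$ furnishes, for free, the norm-bounded projection you were trying to build by summability methods.
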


\begin{proof} Let $Z\LOQ:=\overline{\mathrm{span}}\{\vphi^\al\mid\al\in\Irr\}$. As $\vphi$ is a normal faithful trace on $Z\LIQ$, it follows that $Z\LIQ\cong(Z\LOQ)^*$ completely isometrically and weak*-weak* homeomorphically. Let
$$r:\LIQ\ni x\mapsto x|_{Z\LOQ}\in Z\LIQ$$
be the completely contractive restriction map. The orthogonality relations imply
$$r(u_{ij}^\al)=\frac{\delta_{ij}\lm_i^\al}{d_\al}\chi^\al, \ \ \ \al\in\Irr, \ 1\leq i,j\leq n_\al.$$
In particular, $r(C(\G))\subseteq\overline{\mathrm{span}}\{\chi^\al\mid\al\in\Irr\}$, and $r(\chi^\al)=\chi^\al$ for all $\al\in\Irr$. Since $r$ is weak*-weak* continuous, by Theorem \ref{t:Wo} it follows that $r(x)=x$ for all $x\in Z\LIQ$. Hence, if $x\in ZC(\G)$, then $x=r(x)\in\overline{\mathrm{span}}\{\chi^\al\mid\al\in\Irr\}$. Since the reverse inclusion is obvious, we are done.
\end{proof}

\begin{remark} For a compact quantum group $\G$, let $C_u(\G)$ be its corresponding universal $C^*$-algebra (see \cite{K} for details). There is a universal co-multiplication $\Gam_u:C_u(\G)\rightarrow C_u(\G)\ten_{\min}C_u(\G)$ satisfying $(\pi\ten\pi)\circ\Gam_u=\Gam\circ\pi$, where $\pi:C_u(\G)\rightarrow C(\G)$ is the canonical quotient map. This gives rise to a universal compact quantum group structure on $C_u(\G)$. In particular, there is a $*$-algebra $\mc{A}_u$ of universal matrix coefficients which is dense in $C_u(\G)$, and there are universal characters $\chi^\al_u\in C_u(\G)$ satisfying $\pi(\chi_u^\al)=\chi^\al$, $\al\in\Irr$. In \cite{Wo}, Woronowicz asks whether $ZC_u(\G)\cap\mc{A}_u$ is dense in $ZC_u(\G)=\{x_u\in C_u(\G)\mid\Gam_u(x_u)=\Sigma\Gam_u(x_u)\}$. Theorem \ref{t:Wo} therefore answers this question, in the affirmative, for all co-amenable compact quantum groups, i.e., those for which $C_u(\G)=C(\G)$ (see \cite[Theorem 3.1]{BT}). This generalizes the partial result of Lemeux in the co-amenable Kac setting \cite[Theorem 1.4]{Lemeux}.
\end{remark}

For compact groups $G$, the standard proof that $\mc{Z}(\LO)$ is the closed linear span of the characters utilizes a central bounded approximate identity (BAI) for $\LO$. A similar argument applies for any compact quantum group $\G$ for which $\LOQ$ has a BAI $(f_i)$ in $\mathrm{span}\{\vphi_q^\al\mid\al\in\Irr\}$, which is bounded in the completely bounded multiplier norm, i.e., the maps $\LOQ\ni g\mapsto f_i\star g\in \LOQ$ are uniformly completely bounded. Any $\G$ whose dual has the central almost completely positive approximation property (ACPAP) in the sense of \cite[Definiton 3]{CFY} has this property. Thus, \begin{equation}\label{e:equality}\mc{Z}(\LOQ)=\overline{\mathrm{span}}\{\vphi_q^\al\mid\al\in\Irr\}\end{equation}
for any compact $\G$ with the central ACPAP. By \cite[Theorem 25]{CFY}, this includes $SU_q(2)$, $q\in[-1,0)\cup(0,1]$, as well as any free orthogonal and unitary quantum groups $O_F^+$ and $U_F^+$, for any parameter matrix $F\in GL(n,\C)$ (see \cite[\S1.4,\S4.2]{CFY}, for instance). We conjecture that (\ref{e:equality}) is valid for arbitrary compact $\G$. We now provide support for the conjecture by showing that it holds for arbitrary compact Kac algebras. In turn, we generalize a result of Mosak \cite[Proposition 1.5 (i)]{Mosak}.

\begin{thrm}\label{t:extension} Let $\G$ be a compact Kac algebra. Then $\beta_2(\vphi):\LTQ\rightarrow\LTQ$ extends to a completely contractive projection $\beta_1:\LOQ\rightarrow\mc{Z}(\LOQ)$ satisfying
$$\beta_1(f\star g)=f\star\beta_1(g), \ \ \ f\in\mc{Z}(\LOQ), \ g\in\LOQ.$$
\end{thrm}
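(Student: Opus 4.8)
The plan is to construct $\beta_1$ on a dense subspace of $\LOQ$ using the map $b:\LTQ\to\LOQ$ and the algebra isomorphism $b\circ a = \id$ on $\mathcal{I}_\vphi$, then extend by continuity after establishing complete contractivity. In the Kac case $\vphi$ is a trace, so the map $a:\LIQ\cdot\vphi\to\LTQ$, $a(x\cdot\vphi)=\Lphi(x)$, together with $b$, gives a completely isometric identification of a dense subspace of $\LOQ$ with a dense subspace of $\LTQ$; indeed in the Kac setting $\LOQ$ embeds completely isometrically in (the predual of) $\LTQ$-type data, and crucially $\|\Lphi(x)\|_2 = \|x\cdot\vphi\|$ need not hold, so I will instead work with the completely contractive homomorphism $\lm:\LOQ\to\LIQH$ and the fact (Lemma \ref{l:matrixunits}) that $\LIQH\cong\bigoplus_\al M_{n_\al}(\C)$ with the central projections $z_\al = d_\al\lm(\vphi_q^\al)$. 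First I would define, for $f\in\LOQ$, the element $\beta_1(f)$ by declaring its "Fourier coefficients": since $\lm(f)\in\LIQH = \bigoplus_\al M_{n_\al}(\C)$, compress $z_\al\lm(f)z_\al$ to its normalized trace $\tfrac{1}{n_\al}\tr(z_\al\lm(f)z_\al)\,z_\al$ in each block, and let $\beta_1(f)$ be the unique element of $\LOQ$ whose image under $\lm$ is $\bigoplus_\al \tfrac{1}{n_\al}\tr_\al(z_\al\lm(f)z_\al)z_\al$ — this lands in $\overline{\mathrm{span}}\{\vphi_q^\al\} = \mc{Z}(\LOQ)$ by the discussion preceding Theorem \ref{t:Wo}. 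The subtlety is that $\lm$ is not surjective onto $\LIQH$, so one cannot invert it directly; instead I would define $\beta_1$ on the dense ideal $\mathcal{I}_\vphi$ (or on $\LIQ\cdot\vphi$) via the explicit formula coming from $\beta_2(\vphi)$, namely $\beta_1(x\cdot\vphi) = b(\beta_2(\vphi)\Lphi(x))$, check using \eqref{conj} (which in the Kac case reads $\beta_2(\vphi)\Lphi(u_{kl}^\al) = \tfrac{\delta_{kl}}{n_\al}\Lphi(\chi^\al) = \tfrac{\delta_{kl}}{n_\al}\Lphi(\chi_q^\al)$, since $F^\al = 1$) that this equals the "diagonal averaging" described above, and then extend.

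Second, I would prove complete contractivity of $\beta_1$ on $\mathcal{I}_\vphi$. The cleanest route is to recognize the averaging operation as a composition of completely contractive maps: $f\mapsto \lm(f)$ (completely contractive, in fact completely isometric into $\LIQH^{**}$ or onto its image), followed by the von Neumann algebra conditional expectation $E_{\mathcal{Z}}:\LIQH\to\mc{Z}(\LIQH)$ onto the center (which is a normal completely positive unital projection, hence completely contractive), followed by the pre-adjoint-type identification back into $\LOQ$. The point where care is needed: the image of $E_{\mathcal{Z}}\circ\lm$ must actually land in $\lm(\mc{Z}(\LOQ))$, and the "inverse" $\lm(\mc{Z}(\LOQ))\to\mc{Z}(\LOQ)$ must be completely contractive — this holds because on $\mc{Z}(\LOQ) = \overline{\mathrm{span}}\{\vphi_q^\al\}$ the map $\lm$ restricts to a completely isometric isomorphism onto $\overline{\mathrm{span}}\{z_\al\} = c_0$-direct sum of the one-dimensional blocks, by \eqref{idem} and Lemma \ref{l:matrixunits} (the $\vphi_q^\al$ are mutually orthogonal scaled idempotents summing appropriately). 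So $\beta_1 = (\lm|_{\mc{Z}(\LOQ)})^{-1}\circ E_{\mathcal{Z}}\circ\lm$ is a composition of complete contractions, hence completely contractive, and it is a projection onto $\mc{Z}(\LOQ)$ since $E_{\mathcal{Z}}$ fixes $\lm(\mc{Z}(\LOQ))$ pointwise. That $\beta_1$ extends $\beta_2(\vphi)$: on $\Lphi(x) = a(x\cdot\vphi)$ one has $b(\beta_1(x\cdot\vphi)) $ matching $\beta_2(\vphi)\Lphi(x)$ by the coefficient computation \eqref{conj}, using $b(a(f)) = f$ on $\mathcal{I}_\vphi$ and injectivity of $b$.

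Third, the module property $\beta_1(f\star g) = f\star\beta_1(g)$ for $f\in\mc{Z}(\LOQ)$, $g\in\LOQ$: apply $\lm$ to both sides. The left side becomes $E_{\mathcal{Z}}(\lm(f)\lm(g)) = E_{\mathcal{Z}}(\lm(f)\lm(g))$, and since $\lm(f)\in\mc{Z}(\LIQH)$ and $E_{\mathcal{Z}}$ is a $\mc{Z}(\LIQH)$-bimodule map (a standard property of the center-valued conditional expectation on a direct sum of matrix algebras — in each block it's $T\mapsto \tfrac{1}{n_\al}\tr_\al(T)1$, which is obviously bimodular over scalars), this equals $\lm(f)E_{\mathcal{Z}}(\lm(g)) = \lm(f)\lm(\beta_1(g)) = \lm(f\star\beta_1(g))$; injectivity of $\lm$ finishes it. I expect the main obstacle to be the bookkeeping around non-surjectivity of $\lm$ — making rigorous that the "averaging then invert $\lm$" recipe is well-defined on all of $\LOQ$ (not just $\mathcal{I}_\vphi$) and genuinely completely contractive — and verifying that $\lm$ restricted to $\mc{Z}(\LOQ)$ is a complete isometry onto its (closed) range so the inverse is available; once that structural point is pinned down, everything else is a short computation with the matrix units $e_{ij}^\al$ of Lemma \ref{l:matrixunits} and the Kac-case simplification $F^\al = 1_{n_\al}$.
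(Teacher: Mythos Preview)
Your candidate map is the right one --- on $x\cdot\vphi$ it agrees with $b\circ\beta_2(\vphi)\circ a$ and hence with the paper's $\beta_1$, and the module identity would indeed follow from the $\mc{Z}(\LIQH)$-bimodularity of $E_{\mc{Z}}$ once everything is well-defined. The genuine gap is the complete contractivity step. Your argument hinges on $\lm|_{\mc{Z}(\LOQ)}$ being a complete isometry onto its range, so that the inverse is a complete contraction; this is false already for compact groups. Take $\G=S_3$ and let $\al$ be the two-dimensional irreducible representation. By Lemma~\ref{l:matrixunits}, $\lm(\vphi^\al)=\tfrac{1}{n_\al}z_\al$ has operator norm $\tfrac{1}{2}$ in $\LIQH$, whereas (using traciality of $\vphi$) $\norm{\vphi^\al}_{\LOQ}=\vphi(|\chi^\al|)=\tfrac{1}{6}(1\cdot 2+3\cdot 0+2\cdot 1)=\tfrac{2}{3}$. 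Thus $(\lm|_{\mc{Z}(\LOQ)})^{-1}$ expands norms, and the decomposition $(\lm|_{\mc{Z}(\LOQ)})^{-1}\circ E_{\mc{Z}}\circ\lm$ cannot establish that $\beta_1$ is contractive. There is also a circularity: to even apply the inverse you need $E_{\mc{Z}}(\lm(f))\in\lm(\mc{Z}(\LOQ))$ for all $f\in\LOQ$, and your justification invokes $\mc{Z}(\LOQ)=\overline{\mathrm{span}}\{\vphi_q^\al\}$, which is Corollary~\ref{c:ZL1(G)=B1} --- a \emph{consequence} of the theorem you are proving. The discussion before Theorem~\ref{t:Wo} only gives the inclusion $\overline{\mathrm{span}}\{\vphi_q^\al\}\subseteq\mc{Z}(\LOQ)$.

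The paper sidesteps both problems by dualizing. Rather than invert $\lm$, it exhibits $\beta_1$ as the pre-adjoint of an explicit normal map on $\LIQ$: using a normal unital completely positive left inverse $\Phi:\LIQ\oten\LIQ\to\LIQ$ of $\Gam$ (imported from \cite{RX}), a direct computation with Lemma~\ref{l:conj-formula} shows
\[
\la b\circ\beta_2(\vphi)\circ a(x\cdot\vphi),\,y\ra=\la x\cdot\vphi,\,R\circ\Phi\circ(R\ten R)\circ\Gam(y)\ra,\qquad x\in\mc{A},\ y\in\LIQ.
\]
Since $R\circ\Phi\circ(R\ten R)\circ\Gam$ is a composition of normal completely contractive maps on $\LIQ$, its pre-adjoint is automatically completely contractive on $\LOQ$ --- no inversion is needed. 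The averaging you wanted to perform on $\LIQH$ via $E_{\mc{Z}}$ is thus replaced by the averaging $\Phi$ on $\LIQ\oten\LIQ$, whose unital complete positivity is what actually delivers the bound. The module property and the identification of the range with $\mc{Z}(\LOQ)$ are then handled separately, the latter using Proposition~\ref{p:ZL2(G)} and Lemma~\ref{l:matrixunits} rather than assuming the description of $\mc{Z}(\LOQ)$ in advance.
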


\begin{proof} The argument in \cite[Lemma 3.2]{RX} shows that the map $\Phi:\LIQ\oten\LIQ\rightarrow\LIQ$ given by
 $$\Phi(X)=(\om_{\Lphi(1)}\ten\id)W^*(U^*\ten 1)X(U\ten1)W, \ \ X\in\LIQ\oten\LIQ$$
is a normal, unital, completely positive left inverse to $\Gamma$ satisfying $\Gamma\circ\Phi=(\Phi\ten\id)(\id\ten\Gamma)$. Moreover, since $W\in\LIQ\oten\LIQH$ and $U\Lphi(1)=\Lphi(1)$, we have
$$\vphi(\Phi(x\ten y))=(\om_{\Lphi(1)}\ten\vphi)((U^*xU\ten1)W^*(1\ten y)W)=\vphi(x)\vphi(y)$$
for all $x,y\in\LIQ$. By normality it follows that $\vphi\ten\vphi=\vphi\circ\Phi=\vphi\ten\vphi\circ\Gamma\circ\Phi$, so that $\Gamma\circ\Phi$ is a normal
conditional expectation onto $\Gamma(\LIQ)$ preserving $\vphi\ten\vphi$.

By Lemma \ref{l:conj-formula}, the map $b\circ\beta_2(\vphi)\circ a:\mc{I}_\vphi\rightarrow\LOQ$ satisfies $b\circ\beta_2(\vphi)\circ a(x\cdot\vphi)=\sum_{i=1}^n(y_i\cdot\vphi)\star(x_i\cdot\vphi)$
for $x\in\mc{A}$, where $\Gamma(x)=\sum_{i=1}^nx_i\ten y_i$. Recalling that $\vphi$ is invariant under the unitary antipode, for $y\in\LIQ$ we have
\begin{align*}\la b\circ\beta_2(\vphi)\circ a(x\cdot\vphi),y\ra&=\sum_{i=1}^n\la(y_i\cdot\vphi)\star(x_i\cdot\vphi),y\ra=\sum_{i=1}^n\la(y_i\cdot\vphi)\ten(x_i\cdot\vphi),\Gamma(y)\ra\\
&=\sum_{i=1}^n\vphi\ten\vphi(\Gamma(y)(y_i\ten x_i))=\vphi\ten\vphi(\Gamma(y)\Sigma\Gamma(x))\\
&=\vphi\ten\vphi(\Gamma(R(x))(R\ten R)(\Gam(y)))\\
&=\vphi\ten\vphi(\Gamma\circ\Phi(\Gamma(R(x))(R\ten R)(\Gam(y))))\\
&=\vphi\ten\vphi(\Gamma(R(x))\Gamma\circ\Phi((R\ten R)(\Gam(y))))\\
&=\vphi\ten\vphi(\Gamma(R(x)\Phi((R\ten R)(\Gam(y)))))\\
&=\vphi(R(x)\Phi((R\ten R)(\Gam(y))))\\
&=\vphi(R(\Phi((R\ten R)(\Gam(y))))x)\\
&=\la x\cdot\vphi,R\circ\Phi\circ(R\ten R)\circ\Gamma(y)\ra.
\end{align*}
Since the $\mathrm{span}\{x\cdot\vphi\mid x\in\mc{A}\}$ is dense in $\LOQ$, it follows that the map $b\circ\beta_2(\vphi)\circ a$ has a completely contractive extension to a map
$\beta_1:\LOQ\rightarrow\LOQ$ whose adjoint $\beta_1^*:\LIQ\rightarrow\LIQ$ is given by $\beta_1^*(y)=R\circ\Phi\circ(R\ten R)\circ\Gamma(y)$ for $y\in\LIQ$. Moreover,
since $\beta_2(\vphi)a(x\cdot\vphi)=ab(\beta_2(\vphi)a(x\cdot\vphi))$ for $x\in\mc{A}$, and $\beta_2(\vphi)$ is idempotent, we have
$$\beta_1(x\cdot\vphi)=b\beta_2(\vphi)\beta_2(\vphi)a(x\cdot\vphi)=b\beta_2(\vphi)a\circ b\beta_2(\vphi)a(x\cdot\vphi)=\beta_1\circ\beta_1(x\cdot\vphi)$$
which, by density, implies that $\beta_1$ is also idempotent.

To establish the module property, fix $g\in\LOQ$ and $f\in\mc{Z}(\LOQ)$. Then
\begin{align*}R\ten R(\Gamma(f\star x))&=R\ten R(\Gamma((\id\ten f)\Gam(x)))\\
&=R\ten R((\id\ten\id\ten f)(\Gam\ten\id)(\Gam(x)))\\
&=(\id\ten\id\ten f)((R\ten R)(\Gam\ten\id)(\Gam(x))\\
&=(\id\ten\id\ten f)(\Sigma\Gam\ten\id)(R\ten\id)(\Gam(x)) \ \ \ (\textnormal{equation (\ref{e:antipode})})\\
&=(\id\ten\id\ten R_*(f))(\Sigma\Gam\ten\id)(R\ten R)(\Gam(x))\\
&=(\id\ten\id\ten R_*(f))(\Sigma\Gam\ten\id)(\Sigma\Gam(R(x)))\\
&=(\id\ten\id\ten R_*(f))(\id\ten\Sigma\Gam)(\Sigma\Gam(R(x))) \ \ \ (\textnormal{co-associativity})\\
&=(\id\ten\id\ten R_*(f))(\id\ten\Gam)(\Sigma\Gam(R(x))) \ \ \ (f\in\mc{Z}(\LOQ)).\\
\end{align*}
Applying the map $\Phi$, we obtain
\begin{align*}\Phi(R\ten R(\Gamma(f\star x)))&=(\id\ten R_*(f))(\Phi\ten\id)(\id\ten\Gam)(\Sigma\Gam(R(x)))\\
&=(\id\ten R_*(f))\Gam(\Phi(\Sigma\Gam(R(x))))\\
&=(\id\ten R_*(f))\Gam(\Phi(R\ten R(\Gam(x))))\\
&=R_*(f)\star\Phi(R\ten R(\Gam(x))).\\
\end{align*}
Therefore
$$R\circ\Phi\circ(R\ten R)\circ\Gam(f\star x)=R\circ\Phi\circ(R\ten R)\circ\Gam(x)\star f=f\star R\circ\Phi\circ(R\ten R)\circ\Gam(x)$$
as $f$ is central. The pre-adjoint of the above relation yields the desired module property.

Now, let $f\in\LOQ$. Taking a sequence $(\Lphi(x_n))$ in $\LTQ$ such that $b(\Lphi(x_n))$ converges to $f$, it follows that $\beta_1(b(\Lphi(x_n)))$ converges to $\beta_1(f)$. But $\beta_1(b(\Lphi(x_n)))=b(\beta_2(\vphi)\Lphi(x_n))\in b(\mc{Z}(\LTQ))$, so that $\beta_1(f)\in\overline{b(\mc{Z}(\LTQ))}$, which, by Proposition \ref{p:ZL2(G)}, is contained in $\overline{\textnormal{span}}\{\vphi^\al\mid\al\in\Irr\}\subseteq\mc{Z}(\LOQ)$.

Conversely, let $f\in\mc{Z}(\LOQ)$. By Proposition \ref{p:fixedpoints} we have
$\beta_1(\vphi^\al)=\vphi^\al$ for all $\al\in\Irr$, so the module property of $\beta_1$ entails
$$\beta_1(f)\star\vphi^\al=\beta_1(f\star\vphi^\al)=f\star\beta_1(\vphi^\al)=f\star\vphi^\al, \ \ \ \al\in\Irr.$$
Hence, by Lemma \ref{l:matrixunits}, for every $\xi\in\LTQ$,
$$\lm(f)\xi=\sum_{\al\in\textnormal{Irr}(\G)}d_\al \lm(f\star\vphi^\al)\xi=\sum_{\al\in\textnormal{Irr}(\G)}d_\al \lm(\beta_1(f)\star\vphi^\al)\xi=\lm(\beta_1(f))\xi.$$
By injectivity of $\lm$ we then get $f=\beta_1(f)$ so that $\mc{Z}(\LOQ)=\beta_1(\LOQ)$.
\end{proof}

\begin{cor}\label{c:ZL1(G)=B1}
Let $\G$ be a compact Kac algebra. Then
$$\mc{Z}(\LOQ)=\overline{\mathrm{span}}\{\vphi^\al\mid\al\in\textnormal{Irr($\G$)}\}.$$
\end{cor}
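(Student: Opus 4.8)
The plan is to read the corollary off from Theorem~\ref{t:extension} with essentially no new work. First I would record the Kac-specific simplification: for a compact Kac algebra $\G$ one has $F^\al=1_{n_\al}$ for every $\al\in\Irr$, hence
$$\chi_q^\al=\sum_{i=1}^{n_\al}\lm_i^\al u_{ii}^\al=\sum_{i=1}^{n_\al}u_{ii}^\al=\chi^\al,$$
so that the quantum characters coincide with the characters and $\vphi_q^\al=\chi_q^\al\cdot\vphi=\chi^\al\cdot\vphi=\vphi^\al$. In particular $\overline{\mathrm{span}}\{\vphi_q^\al\mid\al\in\Irr\}=\overline{\mathrm{span}}\{\vphi^\al\mid\al\in\Irr\}$, and $b(\Lphi(\chi_q^\al))=\vphi_q^\al=\vphi^\al$.

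The inclusion $\overline{\mathrm{span}}\{\vphi^\al\mid\al\in\Irr\}\subseteq\mc{Z}(\LOQ)$ is already available: it was noted in Section~2 that $\overline{\mathrm{alg}}\{\vphi_q^\al\mid\al\in\Irr\}$ is a closed ideal of $\mc{Z}(\LOQ)$, and by~(\ref{idem}) together with the orthogonality relation $\vphi_q^\al\star\vphi_q^\be=\tfrac{\delta_{\al\be}}{d_\al}\vphi_q^\al$, this algebra is nothing but the closed linear span $\overline{\mathrm{span}}\{\vphi_q^\al\}=\overline{\mathrm{span}}\{\vphi^\al\}$. So it remains only to prove the reverse inclusion.

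For that I would invoke Theorem~\ref{t:extension}, which yields $\mc{Z}(\LOQ)=\beta_1(\LOQ)$; moreover the argument in the proof of that theorem already shows $\beta_1(\LOQ)\subseteq\overline{b(\mc{Z}(\LTQ))}$, since for $f\in\LOQ$ with $b(\Lphi(x_n))\to f$ one has $\beta_1(b(\Lphi(x_n)))=b(\beta_2(\vphi)\Lphi(x_n))\in b(\mc{Z}(\LTQ))$. By Proposition~\ref{p:ZL2(G)} and the identification $b(\Lphi(\chi_q^\al))=\vphi^\al$ above, this is contained in $\overline{\mathrm{span}}\{\vphi^\al\mid\al\in\Irr\}$. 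Combining the two inclusions gives the claimed equality. I do not expect any genuine obstacle here: the real content has already been absorbed into Theorem~\ref{t:extension} — namely, building the completely contractive projection $\beta_1$ onto $\mc{Z}(\LOQ)$ out of the normal $\vphi\ten\vphi$-preserving conditional expectation $\Gamma\circ\Phi$ onto $\Gamma(\LIQ)$ (the map $\Phi$ coming from \cite{RX}) and verifying its $\mc{Z}(\LOQ)$-module property via the antipode relation~(\ref{e:antipode}) — so here it suffices to identify the range of $\beta_1$, which is routine.
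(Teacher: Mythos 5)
Your proposal is correct and matches the paper's intent exactly: the corollary is stated without separate proof precisely because, as you note, the proof of Theorem~\ref{t:extension} already gives $\mc{Z}(\LOQ)=\beta_1(\LOQ)\subseteq\overline{\mathrm{span}}\{\vphi^\al\mid\al\in\Irr\}$ (using $\vphi_q^\al=\vphi^\al$ in the Kac case via Proposition~\ref{p:ZL2(G)}), while the reverse inclusion was observed in Section~2. Nothing further is needed.
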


\begin{remark} In regards to the conjecture (\ref{e:equality}), it would be interesting to first examine the class $SU_q(2n+1)$, $n\geq 1$, as their duals have recently been shown to exhibit central property $(T)$ \cite[Corollary 8.9]{A}.\end{remark}

\section*{Acknowledgements}

This project was initiated at the Fields Institute during the Thematic Program on Abstract Harmonic Analysis, Banach and Operator Algebras. We are grateful to the Institute for its kind hospitality. We also thank the anonymous referee whose comments helped improve the presentation of the paper.

\end{document}